\newtheorem{thm}{Theorem}[section]
\newtheorem{dfn}[thm]{Definition}
\newtheorem{prp}[thm]{Proposition}
\newtheorem{lem}[thm]{Lemma}
\newcommand \F {\mathcal F}
\newcommand \Fix {\operatorname{Fix}}
\renewcommand \Im {\operatorname{Im}}
\newcommand \N {\mathbb N}
\newcommand \Z {\mathbb Z}
\newcommand \abs[1] {\lvert #1 \rvert}
\newcommand \geom[1] {\lVert #1 \rVert}
\newcommand \id {\mathrm{id}}
\newcommand \pt {\mathbf{pt}}
\newcommand \susp {\Sigma}
\newcommand \z {\mathbf z}
\begin{document}

\title{Homotopy type of Frobenius complexes}
\author{TOUNAI Shouta}
\address{Graduate School of Mathematical Sciences, The University of Tokyo, 3-8-1 Komaba, Meguro, Tokyo, 153-8914 Japan}
\email{tounai@ms.u-tokyo.ac.jp}
\subjclass[2010]{06A07, 55P15, 13D40}
\keywords{monoid; semigroup; Frobenius complex; poset; order complex; homotopy type; monoid algebra; semigroup ring;
multigraded Poincar\'e series}

\begin{abstract}
  A submonoid $\Lambda$ of ${\mathbb N}^d$ has a natural order defined by ${\lambda \le \lambda + \mu}$
  for ${\lambda, \mu \in \Lambda}$.
  The Frobenius complex is the order complex of an open interval of $\Lambda$ with respect to this order.
  In this paper, the homotopy type of the Frobenius complex of $\Lambda$ is determined
  when $\Lambda$ is the submonoid of $\mathbb N$ generated by two relatively prime integers, or
  the submonoid of ${\mathbb N}^2$ generated by three elements of which any two are linearly independent.
  As an application,
  the multigraded Poincar\'e series of the quotient algebra $K[x, y, z] / (x^p y^q - z^r)$ over a field $K$
  is determined and proved to be rational.
\end{abstract}
\maketitle

\section{Introduction}

An additive monoid $\Lambda$ has a natural preorder $\le_\Lambda$ defined by
\[ \lambda \le_\Lambda \mu \iff {^\exists \nu \in \Lambda} \text{ such that } \lambda + \nu = \mu. \]
If $\Lambda$ is cancellative
(i.e. $\lambda + \nu = \mu + \nu$ implies $\lambda = \mu$ for any $\lambda, \mu, \nu \in \Lambda$) and
has no non-trivial inverses,
then $\le_\Lambda$ is a partial order on $\Lambda$.
For such an additive monoid $\Lambda$ and $\lambda \in \Lambda_{+} := \Lambda \setminus \{0\}$,
define the \emph{Frobenius complex} $\F(\lambda; \Lambda)$ of $\Lambda$ for $\lambda$ by
\[ \F(\lambda; \Lambda) = \geom{(0, \lambda)_\Lambda}, \]
where $\geom P$ denotes the geometric realization of a poset $P$,
and $(0, \lambda)_\Lambda$ denotes
the open interval $\{\, \mu \in \Lambda \mid 0 <_\Lambda \mu <_\Lambda \lambda \,\}$ in $\Lambda$
(given the restricted order of $\le_\Lambda$).
The symbol $<_\Lambda$ means the strict order associated to $\le_\Lambda$, i.e.
\[ \lambda <_\Lambda \mu \iff \lambda \le_\Lambda \mu \text{ and } \lambda \neq \mu
  \iff {^\exists \nu \in \Lambda_{+}} \text{ such that } \lambda + \nu = \mu. \]
We will denote $\le_\Lambda$ and $<_\Lambda$ simply by $\le$ and $<$ respectively when no confusion can arise.

Clark and Ehrenborg~\cite{CE} determined the homotopy type of the Frobenius complex of $\Lambda_1$
using discrete Morse theory
when $\Lambda_1$ is the submonoid of $\N$ generated by two relatively prime integers.
According to this theorem, the Frobenius complex of $\Lambda_1$
is either contractible or homotopy equivalent to a sphere.
In this paper, we give a shorter and simpler calculation for the Frobenius complex of $\Lambda_1$,
in which we construct an isomorphism between $\Lambda_1$ and a quotient monoid $\Lambda^{p,q}$
of the free additive monoid $\N^2$,
introduce a connection between $\Lambda^{p,q}$ and a much simpler quotient monoid $\Lambda^{2,2}$,
and determine the homotopy type of the Frobenius complex of $\Lambda^{2,2}$.
Moreover, we also determine the homotopy type of the Frobenius complex of $\Lambda_2$
when $\Lambda_2$ is the submonoid of $\N^2$ generated by three elements of which any two are linearly independent.
This calculation is based on a similar idea.
As a result, we proved that the Frobenius complex of $\Lambda_2$
is either contractible or homotopy equivalent to a sphere or a wedge of two spheres of the same dimension.
The details are stated in Proposition~\ref{prp : 3 gen quotient}, Theorem~\ref{thm : p,q,r to 2,2,2},
and Theorem~\ref{thm : case 2,2,2}.

Laudal and Sletsj{\o}e~\cite{LS} and Peeva, Reiner and Sturmfels~\cite{PRS} associated
the Betti numbers of the Frobenius complex of $\Lambda$
with the multigraded Poincar\'e series of the monoid algebra of $\Lambda$.
Using this connection and the result of this paper,
we determine the multigraded Poincar\'e series of the quotient algebra ${K[x, y, z] / (x^p y^q - z^r)}$
over a field $K$ for positive integers $p, q$ and $r$,
and prove that this formal power series is given by a rational function.

This paper is organized in the following way.
In Section~\ref{sec : two gen} we introduce some notions and
calculate the homotopy type of the Frobenius complex of $\Lambda_1$.
In Section~\ref{sec : three gen} we proceed to calculation
for the homotopy type of the Frobenius complex of $\Lambda_2$.
Section~\ref{sec : application} is devoted to an application.
We calculate the graded Poincar\'e series of the monoid algebra
using the result of the previous section.

\section{Submonoids of $\N$ with two generator}
\label{sec : two gen}

In this paper, $\N$ denotes the additive monoid $(\{0, 1, 2, \dotsc\}, {+})$ of non-negative integers.
First, we consider the submonoid $\N p + \N q$ of $\N$ generated by two relatively prime integers $p$ and $q$.
For positive integers $p$ and $q$, define
\[ \Lambda^{p,q} = \langle\, a, b \mid p a = q b \,\rangle, \]
where the right hand side means the quotient monoid of the free additive monoid $\N a \oplus \N b$
by the equivalence relation $\sim$ generated by
\[ \lambda + p a \sim \lambda + q b \qquad (\lambda \in \N a \oplus \N b). \]

\begin{prp}
  Let $p$ and $q$ be relatively prime integers.
  Then $\N p + \N q$ is isomorphic to $\Lambda^{p,q}$.
\end{prp}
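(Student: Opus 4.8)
The plan is to exhibit the evaluation homomorphism and show it induces the desired isomorphism. First I would define the monoid homomorphism $\phi \colon \N a \oplus \N b \to \N$ on the free monoid by $\phi(a) = q$ and $\phi(b) = p$, so that $\phi(ma + nb) = mq + np$; its image is exactly the submonoid $\N p + \N q$. The crucial compatibility is that $\phi$ identifies the two sides of the defining relation: $\phi(pa) = pq = qp = \phi(qb)$, hence $\phi(\lambda + pa) = \phi(\lambda + qb)$ for every $\lambda$, so $\phi$ is constant on each generating pair of the congruence $\sim$ and therefore factors through a homomorphism $\bar\phi \colon \Lambda^{p,q} \to \N p + \N q$. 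Surjectivity is immediate since $p = \phi(b)$ and $q = \phi(a)$ already lie in the image.

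The remaining and main task is injectivity of $\bar\phi$, which amounts to showing that the congruence classes of $\sim$ are exactly the fibers of $\phi$. I would first describe the classes explicitly: an elementary move replaces $\lambda + pa$ by $\lambda + qb$ or vice versa, i.e. it sends $ma + nb$ to $(m - p)a + (n + q)b$ or to $(m + p)a + (n - q)b$. Thus the class of $ma + nb$ is contained in $\{\, (m + kp)a + (n - kq)b \mid k \in \Z \,\}$, and since the first coordinate is strictly increasing and the second strictly decreasing in $k$, every representative with nonnegative coordinates can be reached from any other by a monotone chain of moves that stays in the nonnegative region; hence the class is precisely the set of such nonnegative representatives.

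To finish, suppose $\phi(ma + nb) = \phi(m'a + n'b)$, that is $mq + np = m'q + n'p$, equivalently $(m - m')q = (n' - n)p$. Here the hypothesis that $p$ and $q$ are relatively prime enters: it forces $p \mid (m - m')$, so writing $m - m' = kp$ gives $n' - n = kq$, whence $(m', n') = (m - kp, n + kq)$ is a nonnegative representative in the class of $ma + nb$ by the previous paragraph. Therefore $ma + nb \sim m'a + n'b$, proving $\bar\phi$ injective. I expect the only genuine obstacle to be this injectivity step --- concretely, verifying that $\sim$ does not collapse the fibers of $\phi$ any further than the lattice description above, which is exactly where coprimality and the connectedness of the chain of representatives are needed; well-definedness and surjectivity are routine.
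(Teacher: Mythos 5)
Your proposal is correct and follows essentially the same route as the paper: both construct the evaluation homomorphism sending $a \mapsto q$ and $b \mapsto p$ and deduce the isomorphism $\Lambda^{p,q} \cong \N p + \N q$; the paper simply asserts well-definedness and injectivity, while you supply the details (the explicit description of the congruence classes as $\{(m+kp)a + (n-kq)b \mid k \in \Z\}$ intersected with the nonnegative region, and the coprimality argument), and these details are all sound, including the monotone-chain verification that intermediate representatives stay nonnegative.
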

\begin{proof}
  Consider the monoid homomorphism $f : \Lambda^{p,q} \to \N$ which sends $a$ and $b$ to $q$ and $p$ respectively.
  Then $f$ is well-defined and injective.
  Hence $f$ induces a monoid isomorphism $\Lambda^{p,q} \cong \Im f = \N p + \N q$.
\end{proof}

When either $p = 1$ or $q = 1$ holds, the submonoid $\N p + \N q$ equals to $\N$.
In this case, the Frobenius complex are easily determined.

\begin{prp}
  The Frobenius complex of $\N$ for $n \in \N_{+}$ satisfies
  \[ \F(n; \N) \simeq
    \begin{cases}
      S^{-1} & \text{if $n = 1$,} \\
      \pt & \text{if $n \ge 2$,}
    \end{cases} \]
  where $S^n$ denotes the $n$-sphere
  (in particular, $S^{-1}$ means the empty space),
  and $\pt$ the one-point space.
\end{prp}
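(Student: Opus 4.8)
The plan is to make the underlying poset explicit and then recognise its order complex as a simplex. First I would note that on $\N$ the natural order $\le_\N$ coincides with the usual order on the integers: the equation $\lambda + \nu = \mu$ has a solution $\nu \in \N$ exactly when $\lambda$ does not exceed $\mu$ in the usual sense, so the two orders agree. Consequently the open interval $(0, n)_\N$ is simply the set $\{1, 2, \dotsc, n-1\}$ equipped with its usual total order.

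I would then split into two cases. For $n = 1$ the interval $(0, 1)_\N$ is empty, so $\F(1; \N) = \geom{(0, 1)_\N}$ is the empty space, which by the stated convention is $S^{-1}$; this disposes of the first case at once. For $n \ge 2$ the interval is a nonempty chain with $n - 1$ elements. The key step is the standard fact that the order complex of a totally ordered poset is a simplex: every subset of a chain is again a chain, hence a face of the order complex, so the order complex of an $m$-element chain is the full $(m-1)$-simplex $\Delta^{m-1}$. Taking $m = n - 1 \ge 1$ gives $\F(n; \N) = \geom{(0, n)_\N} = \Delta^{n-2}$, which is contractible and therefore homotopy equivalent to the one-point space $\pt$.

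I do not anticipate any genuine obstacle here; the computation is elementary once the order has been identified. The only point deserving attention is the bookkeeping at the boundary, namely checking that $n = 1$ produces the empty interval (and hence $S^{-1}$) while $n = 2$ already yields a single-vertex complex, i.e.\ a point, so that the two cases abut correctly and no value of $n$ is left uncovered.
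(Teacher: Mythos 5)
Your proof is correct, and since the paper states this proposition without proof (merely remarking that the case is ``easily determined''), your argument --- identifying $\le_\N$ with the usual order, so that $(0,n)_\N$ is the chain $\{1,\dotsc,n-1\}$ whose order complex is the simplex $\Delta^{n-2}$, contractible for $n \ge 2$ and empty for $n = 1$ --- is exactly the standard justification the paper leaves implicit. Nothing is missing, and your boundary check at $n=1$ and $n=2$ is the right place to be careful.
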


We now turn to the case $p, q \ge 2$.
We will see that we need only consider $\Lambda^{2,2}$.
For positive integers $p$ and $q$, define the \emph{transition function} $\tau^p_q : \N \to \N$ by
\[ \tau^p_q(m p + n) = m q + \min\{n, q-1\} \qquad (m \in \N, \ n \in \N_{< p}). \]
The following properties are easy to check.

\begin{prp}
  \label{prp : trans func}
  The transition functions satisfy the following:
  \begin{enumerate}
    \item $\tau^p_q(n + p) = \tau^p_q(n) + q$ for each $n \in \N$,
    \item $\tau^p_q$ is order-preserving,
    \item $\tau^q_p \circ \tau^p_q \le \id_\N$,
    \item $\tau^q_p \circ \tau^p_q = \id_\N$ if $p \le q$, and
    \item $(\tau^p_q)^{-1}(0) = \{0\}$ if either $p = q = 1$ or $p, q \ge 2$ holds.
  \end{enumerate}
\end{prp}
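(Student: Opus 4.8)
The plan is to work throughout with the \emph{division-with-remainder} normal form: every $k \in \N$ is written uniquely as $k = mp + n$ with $m \in \N$ and $0 \le n < p$, and all five assertions then reduce to bookkeeping with this form. For (1) I would simply substitute $k + p = (m+1)p + n$ into the defining formula: the remainder term $\min\{n, q-1\}$ is unchanged while the multiple of $q$ increases by one, giving $\tau^p_q(k+p) = \tau^p_q(k) + q$. For (2), since $\le$ on $\N$ is generated by the covers $k < k+1$, it suffices to check $\tau^p_q(k) \le \tau^p_q(k+1)$; I would split into the case $n < p-1$ (where $m$ is unchanged and $\min\{n,q-1\}$ can only increase) and the case $n = p-1$ (where $k+1 = (m+1)p$ resets the remainder to $0$ but raises the multiple of $q$, and $mq + q \ge mq + \min\{p-1, q-1\}$ because $\min\{p-1,q-1\} < q$).

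The crux is (3), and I expect the decisive observation to be that $\tau^p_q(k) = mq + \min\{n, q-1\}$ is \emph{already} in normal form for $\tau^q_p$, since $j := \min\{n, q-1\}$ satisfies $0 \le j < q$. Hence $\tau^q_p(\tau^p_q(k)) = mp + \min\{j, p-1\}$, and because $\min\{j, p-1\} \le j \le n$ this is at most $mp + n = k$, which proves (3). Property (4) then follows by tracking where equality holds: when $p \le q$ we have $n < p \le q$, so $n \le q-1$ and $j = n$; and $n < p$ gives $\min\{n, p-1\} = n$, so the composite returns $mp + n = k$ exactly.

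Finally, for (5), solving $\tau^p_q(k) = mq + \min\{n, q-1\} = 0$ forces both nonnegative summands to vanish; as $q \ge 1$ this yields $m = 0$ and $\min\{n, q-1\} = 0$. If $p, q \ge 2$, then $q - 1 \ge 1$ forces $n = 0$, hence $k = 0$; the same conclusion is automatic when $p = q = 1$, since $0 \le n < p = 1$ already forces $n = 0$. No step here is genuinely hard, as the proposition is elementary, but the one point deserving care is the excluded case $q = 1 < p$, which explains the hypothesis: there $\min\{n, q-1\} = 0$ for every $n$, so the fibre over $0$ would be $\{0, 1, \dots, p-1\} \ne \{0\}$.
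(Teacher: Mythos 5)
Your proof is correct, and it is exactly the routine verification the paper has in mind: the paper gives no proof at all for this proposition (it says only that the properties ``are easy to check''), and your division-with-remainder bookkeeping --- in particular the key observation in (3) that $mq + \min\{n, q-1\}$ is already in normal form for $\tau^q_p$ since $\min\{n,q-1\} < q$ --- is the intended elementary argument. Your closing remark identifying $q = 1 < p$ as the failing case that necessitates the hypothesis in (5) is also accurate.
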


For positive integers $p, q, r$ and $s$,
define the \emph{transition map} $T^{p,q}_{r,s} : \Lambda^{p,q} \to \Lambda^{r,s}$ by
\[ T^{p,q}_{r,s}(m a + n b) = \tau^p_r(m) a + \tau^q_s(n) b \qquad (m, n \in \N). \]
Proposition~\ref{prp : trans func} implies the following properties.

\begin{prp}
  \label{prp : trans map}
  The transition maps satisfy the following:
  \begin{enumerate}
    \item $T^{p,q}_{r,s}$ is well-defined,
    \item $T^{p,q}_{r,s}$ is order-preserving,
    \item $T^{r,s}_{p,q} \circ T^{p,q}_{r,s} \le \id_{\Lambda^{p,q}}$,
    \item $T^{r,s}_{p,q} \circ T^{p,q}_{r,s} = \id_{\Lambda^{p,q}}$
      if $p \le r$ and $q \le s$, and
    \item $(T^{p,q}_{r,s})^{-1}(0) = \{0\}$
      if $p, q, r, s \ge 2$.
  \end{enumerate}
\end{prp}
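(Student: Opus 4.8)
The plan is to deduce each of the five clauses from the corresponding clause of Proposition~\ref{prp : trans func}, transferring every statement about $\tau$ coordinatewise into a statement about $T$. The one structural fact I would isolate at the outset is that the canonical projection $\pi : \N a \oplus \N b \to \Lambda^{p,q}$ is a surjective monoid homomorphism, hence order-preserving; in particular $m' \le m$ and $n' \le n$ in $\N$ force $m' a + n' b \le m a + n b$ in $\Lambda^{p,q}$. This lets me push inequalities between representatives down into the quotient, even though $T$ itself is not a homomorphism.

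For clause~(1) I would work with the lift $\tilde T : \N a \oplus \N b \to \Lambda^{r,s}$ given by $\tilde T(m a + n b) = \tau^p_r(m)\, a + \tau^q_s(n)\, b$, and check that it is constant on the classes of $\sim$. Since $\sim$ is generated by the pairs $(\lambda + p a, \lambda + q b)$, it suffices to verify $\tilde T(\lambda + p a) = \tilde T(\lambda + q b)$ for every $\lambda = m a + n b$. Expanding both sides with property~(1) of $\tau$ turns the left side into $(\tau^p_r(m) + r)\, a + \tau^q_s(n)\, b$ and the right side into $\tau^p_r(m)\, a + (\tau^q_s(n) + s)\, b$; these agree in $\Lambda^{r,s}$ precisely because of the defining relation $r a = s b$. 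This step is the heart of the proof: it is the only place where the algebra of the target monoid interacts with the arithmetic of $\tau$, and it is exactly the reason clause~(1) of Proposition~\ref{prp : trans func} was recorded.

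Clauses~(2)--(4) are then formal. For~(2), given $\lambda \le \mu$ I would write $\mu = \lambda + \nu$, lift $\lambda$ and $\nu$ to representatives $m a + n b$ and $k a + l b$, so that $(m+k) a + (n+l) b$ represents $\mu$; order-preservation of $\tau$ (property~(2)) gives $\tau^p_r(m) \le \tau^p_r(m+k)$ and $\tau^q_s(n) \le \tau^q_s(n+l)$, whence $T(\lambda) \le T(\mu)$ by the opening remark. For~(3) and~(4) I would compute $T^{r,s}_{p,q} \circ T^{p,q}_{r,s}$ on a representative $m a + n b$ and obtain $(\tau^r_p \circ \tau^p_r)(m)\, a + (\tau^s_q \circ \tau^q_s)(n)\, b$; clauses~(3) and~(4) of Proposition~\ref{prp : trans func} then give the claimed inequality, respectively equality, coordinatewise, and again the opening remark carries it into $\Lambda^{p,q}$.

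For clause~(5) the extra ingredient is that the identity $0$ is alone in its $\sim$-class in $\Lambda^{r,s}$: since $\lambda + r a$ and $\lambda + s b$ are both nonzero in the free monoid when $r, s \ge 1$, the element $0$ appears in no generating pair, so $\tau^p_r(m)\, a + \tau^q_s(n)\, b = 0$ forces $\tau^p_r(m) = \tau^q_s(n) = 0$ in $\N$. With $p, q, r, s \ge 2$, property~(5) of Proposition~\ref{prp : trans func} applies to both $\tau^p_r$ and $\tau^q_s$ and yields $m = n = 0$, i.e. $\lambda = 0$; the reverse inclusion is immediate. The only genuine obstacle in the whole argument is clause~(1), and once well-definedness is in place everything else is a coordinatewise transcription of Proposition~\ref{prp : trans func}.
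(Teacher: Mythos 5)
Your proof is correct and takes exactly the route the paper intends: the paper states this proposition without proof, saying only that it follows from Proposition~\ref{prp : trans func}, and your coordinatewise transfer --- checking $\tilde T$ on the generating pairs of $\sim$ via property~(1) of $\tau$ and the relation $ra = sb$, and noting that $0$ is alone in its $\sim$-class for clause~(5) --- is precisely the routine verification the paper leaves to the reader. Nothing to correct.
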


The following is a key theorem of this paper.

\begin{thm}
  \label{thm : p,q to 2,2}
  For $p, q \in \N_{\ge 2}$
  the Frobenius complex of $\Lambda^{p,q}$ for $\lambda \in \Lambda^{p,q}_{+}$ satisfies
  \[ \F(\lambda; \Lambda^{p,q}) \simeq
    \begin{cases}
      \F(T^{p,q}_{2,2}(\lambda); \Lambda^{2,2}) & \text{if $\lambda \in \Fix(T^{2,2}_{p,q} \circ T^{p,q}_{2,2})$,} \\
      \pt & \text{otherwise,}
    \end{cases} \]
  where $\Fix f$ denotes the set of all fixed points of $f$, i.e.
  \[ \Fix f = \{\, x \in X \mid f(x) = x \,\} \]
  for a function $f$ on a set $X$.
\end{thm}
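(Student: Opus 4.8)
The plan is to analyze the composite $\pi := T^{2,2}_{p,q} \circ T^{p,q}_{2,2} : \Lambda^{p,q} \to \Lambda^{p,q}$ and exhibit it as a coclosure operator on the poset $\Lambda^{p,q}$. By Proposition~\ref{prp : trans map}(2),(3),(5) this $\pi$ is order-preserving, satisfies $\pi \le \id_{\Lambda^{p,q}}$, and has $\pi^{-1}(0) = \{0\}$. Applying part~(4) with $2 \le p$ and $2 \le q$ gives $T^{p,q}_{2,2} \circ T^{2,2}_{p,q} = \id_{\Lambda^{2,2}}$, whence $\pi \circ \pi = \pi$; so $\pi$ is an idempotent, order-preserving map below the identity, with image $\Im \pi = \Fix \pi$. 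First I would record that $\pi$ restricts to a self-map of the open interval $(0, \lambda)$: for $\mu \in (0,\lambda)$ one has $\pi(\mu) \le \mu < \lambda$, and $\pi(\mu) \neq 0$ because $\mu \neq 0$ and $\pi^{-1}(0) = \{0\}$, so $0 < \pi(\mu) < \lambda$. Throughout I will rely on the order homotopy lemma: if $f, g : P \to Q$ are order-preserving maps of posets with $f \le g$ pointwise, then $\|f\|$ and $\|g\|$ are homotopic.

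In the case $\lambda \in \Fix \pi$, I would first argue that $T^{2,2}_{p,q}$ and $T^{p,q}_{2,2}$ restrict to mutually inverse order-isomorphisms between $\Lambda^{2,2}$ and $\Fix \pi$: indeed $T^{p,q}_{2,2} \circ T^{2,2}_{p,q} = \id_{\Lambda^{2,2}}$, while on $\Fix\pi$ the reverse composite is $\pi$, which acts as the identity there. Since $T^{2,2}_{p,q}(T^{p,q}_{2,2}(\lambda)) = \pi(\lambda) = \lambda$ and both maps send $0$ to $0$, this isomorphism carries $(0, T^{p,q}_{2,2}(\lambda))_{\Lambda^{2,2}}$ onto $\Fix\pi \cap (0,\lambda)$, inducing a simplicial isomorphism and hence a homeomorphism of the corresponding order complexes. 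Separately, the restriction $\pi|_{(0,\lambda)}$ is an idempotent order-preserving self-map below $\id_{(0,\lambda)}$, so the order homotopy lemma applied to $\pi|_{(0,\lambda)} \le \id_{(0,\lambda)}$ shows the inclusion $\Fix\pi \cap (0,\lambda) \hookrightarrow (0,\lambda)$ is a homotopy equivalence. Composing the two identifications yields $\F(\lambda; \Lambda^{p,q}) \simeq \F(T^{p,q}_{2,2}(\lambda); \Lambda^{2,2})$.

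In the case $\lambda \notin \Fix \pi$, I would set $\mu_0 := \pi(\lambda)$, so that $0 < \mu_0 < \lambda$ and thus $\mu_0 \in (0,\lambda)$; moreover, for every $\mu \in (0,\lambda)$ we have $\pi(\mu) \le \pi(\lambda) = \mu_0$ since $\mu \le \lambda$. Hence on $(0,\lambda)$ the map $\pi$ satisfies simultaneously $\pi \le \id_{(0,\lambda)}$ and $\pi \le c_{\mu_0}$, where $c_{\mu_0}$ is the constant map at $\mu_0$. Two applications of the order homotopy lemma give $\id_{\|(0,\lambda)\|} \simeq \|\pi\| \simeq \|c_{\mu_0}\|$, and as $\|c_{\mu_0}\|$ is a constant map the space $\|(0,\lambda)\|$ is contractible, i.e. $\F(\lambda; \Lambda^{p,q}) \simeq \pt$.

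The technical heart, and the step I expect to take the most care, is setting up the coclosure picture correctly: verifying that $\pi$ genuinely restricts to the open interval and remains idempotent there, which is exactly where the condition $\pi^{-1}(0)=\{0\}$ from Proposition~\ref{prp : trans map}(5) is essential. Once that is in place, both cases become routine applications of the order homotopy lemma. The most delicate bookkeeping lies in the fixed-point case, in checking that the order-isomorphism $\Lambda^{2,2} \cong \Fix \pi$ sends the open interval $(0, T^{p,q}_{2,2}(\lambda))$ precisely onto $\Fix\pi \cap (0,\lambda)$; this hinges on the identities $\pi(\lambda) = \lambda$ and $T^{2,2}_{p,q}(T^{p,q}_{2,2}(\lambda)) = \lambda$.
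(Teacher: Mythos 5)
Your proposal is correct and follows essentially the same route as the paper: you identify $\pi = T^{2,2}_{p,q} \circ T^{p,q}_{2,2}$ as an idempotent order-preserving map below the identity with $\pi^{-1}(0)=\{0\}$ (the paper's ``downward closure operator,'' handled in its Lemma~\ref{lem : downward}), deduce via the order homotopy lemma that $(0,\lambda)$ deformation retracts onto $\Fix\pi \cap (0,\lambda)$, and transport the interval through the poset isomorphism $\Fix\pi \cong \Lambda^{2,2}$. The only cosmetic difference is in the non-fixed case, where the paper contracts $\Fix\pi \cap (0,\lambda)$ using its maximum element $\pi(\lambda)$, while you contract $(0,\lambda)$ directly via the comparisons $c_{\pi(\lambda)} \ge \pi \le \id$; both are immediate applications of the same lemma.
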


We introduce the following notion to prove the theorem.

\begin{dfn}
  Let $P$ be a poset.
  A downward closure operator on $P$
  is an order-preserving map $f : P \to P$ which satisfies both $f \le \id_P$ and $f^2 = f$.
\end{dfn}

\begin{lem}
  \label{lem : downward}
  Let $f$ be a downward closure operator on a poset $P$, and
  $a, b \in P$ with $a < b$.
  Assume that $f^{-1}(a) = \{a\}$.
  Then we have
  \[ \geom{(a, b)_P} \simeq
    \begin{cases}
      \geom{(a, b)_{\Fix f}} & \text{if $b \in \Fix f$,} \\
      \pt & \text{otherwise.}
    \end{cases} \]
\end{lem}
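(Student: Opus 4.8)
The plan is to restrict $f$ to the open interval $(a,b)_P$, recognize the restriction as a downward closure operator, and invoke the standard deformation retraction that such an operator induces. First I would note that the hypothesis $f^{-1}(a) = \{a\}$ forces $a \in \Fix f$: the set $f^{-1}(a)$ equals $\{a\}$ and in particular contains $a$, so $f(a) = a$.

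Next I would check that $f$ restricts to a self-map $\bar f$ of $(a,b)_P$. Given $x$ with $a < x < b$, the inequality $f \le \id_P$ yields $f(x) \le x < b$, while order-preservation together with $f(a) = a$ gives $a = f(a) \le f(x)$; since $x \ne a$, the hypothesis $f^{-1}(a) = \{a\}$ excludes $f(x) = a$, so in fact $a < f(x) < b$. Thus $\bar f \colon (a,b)_P \to (a,b)_P$ is well-defined, and as a restriction of $f$ it remains order-preserving, still satisfies $\bar f \le \id$, and stays idempotent. Hence $\bar f$ is a downward closure operator on $(a,b)_P$, with fixed-point set $\Fix \bar f = \Fix f \cap (a,b)_P$.

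The topological input is that a downward closure operator retracts the order complex onto its fixed points: because $\bar f \le \id$, the induced map $\geom{\bar f}$ is homotopic to the identity (comparable order-preserving maps induce homotopic maps on order complexes), and because $\bar f^2 = \bar f$ it is a retraction onto $\geom{\Fix \bar f}$. Therefore $\geom{(a,b)_P} \simeq \geom{\Fix \bar f}$. If this retraction theorem is not already available in the text, I would establish it here from the order-homotopy lemma.

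Finally I would split into the two cases. If $b \in \Fix f$, then $\Fix \bar f = \{\, y \in \Fix f \mid a < y < b \,\} = (a,b)_{\Fix f}$, which gives the first alternative directly. If $b \notin \Fix f$, I claim $f(b)$ is the greatest element of $\Fix \bar f$: the argument above shows $a < f(b) < b$ with $f(b) \in \Fix f$, so $f(b) \in \Fix \bar f$, and for any $y \in \Fix \bar f$ the relation $y < b$ with $f(y) = y$ and monotonicity yields $y = f(y) \le f(b)$. Since a poset with a greatest element has a contractible order complex, $\geom{(a,b)_P} \simeq \pt$. The main obstacle is the bookkeeping that confines $\bar f$ to the open interval, precisely where $f^{-1}(a) = \{a\}$ is indispensable, and, secondarily, identifying the top element $f(b)$ in the non-fixed case.
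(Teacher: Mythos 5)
Your proposal is correct and takes essentially the same route as the paper's own proof: both use $f^{-1}(a) = \{a\}$ to show $f$ restricts to an idempotent order-preserving self-map of $(a,b)_P$ with image in $\Fix f$, invoke the order-homotopy lemma (the retraction $r$ and inclusion $i$ satisfy $ri = \id$ and $ir \le \id$) to conclude $\geom{(a,b)_P} \simeq \geom{(a,b)_P \cap \Fix f}$, and, when $b \notin \Fix f$, identify $f(b)$ as the maximum element of that subposet, forcing contractibility. No corrections are needed.
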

\begin{proof}
  We first show that $f((a, b)_P) \subset (a, b)_P \cap \Fix f$.
  Let $x \in (a, b)_P$.
  Since $f^{-1}(a) = \{a\}$, we have $a < f(x)$.
  Since $f$ is a downward closure operator,
  we have $f(x) \le x < b$ and $f(f(x)) = f(x)$.

  Hence $f$ induces an order-preserving map $(a, b)_P \to (a, b)_P \cap \Fix f$, say $r$.
  Let $i$ be the inclusion $(a, b)_P \cap \Fix f \hookrightarrow (a, b)_P$.
  The geometric realizations of $i$ and $r$ are mutually homotopy inverse,
  since $r i = \id$ and $i r \le \id$.
  Thus $\geom{(a, b)_P}$ is homotopy equivalent to $\geom{(a, b)_P \cap \Fix f}$.
  When $b \in \Fix f$, the proof is complete since $(a, b)_P \cap \Fix f = (a, b)_{\Fix f}$.
  When $b \notin \Fix f$,
  it is easy to check that $f(b)$ is the maximum element of $(a, b)_P \cap \Fix f$.
  Thus we have $\geom{(a, b)_P \cap \Fix f} \simeq \pt$.
\end{proof}

\begin{proof}
  [Proof of Theorem~\ref{thm : p,q to 2,2}]
  By Proposition~\ref{prp : trans map},
  $T^{2,2}_{p,q} \circ T^{p,q}_{2,2}$ is a downward closure operator on $\Lambda^{p,q}$,
  and satisfies $(T^{2,2}_{p, q} \circ T^{p,q}_{2,2})^{-1}(0) = \{0\}$.
  By Lemma~\ref{lem : downward}, we obtain
  \[ \F(\lambda; \Lambda^{p,q}) = \geom{(0, \lambda)_{\Lambda^{p,q}}} \simeq
    \begin{cases}
      \geom{(0, \lambda)_{\Fix(T^{2,2}_{p,q} \circ T^{p,q}_{2,2})}}
      & \text{if $\lambda \in \Fix(T^{2,2}_{p,q} \circ T^{p,q}_{2,2})$,} \\
      \pt & \text{otherwise.}
    \end{cases} \]
  Assume that $\lambda \in \Fix(T^{2,2}_{p,q} \circ T^{p,q}_{2,2})$.
  Since $T^{p,q}_{2,2}$ and $T^{2,2}_{p,q}$ induce a poset isomorphism
  $\Fix(T^{2,2}_{p,q} \circ T^{p,q}_{2,2}) \cong \Lambda^{2,2}$,
  we have
  \[ \geom{(0, \lambda)_{\Fix(T^{2,2}_{p,q} \circ T^{p,q}_{2,2})}}
    \cong \geom{(T^{p,q}_{2,2}(0), T^{p,q}_{2,2}(\lambda))_{\Lambda^{2,2}}}
    = \F(T^{p,q}_{2,2}; \Lambda^{2,2}). \qedhere \]
\end{proof}

We now consider the Frobenius complexes of $\Lambda^{2,2}$.

\begin{thm}
  \label{thm : case 2,2}
  The Frobenius complex $\F(\lambda; \Lambda^{2,2})$ of $\Lambda^{2,2}$
  for $\lambda = m a + n b \in \Lambda^{2,2}_{+}$
  is homotopy equivalent to $S^{m + n - 2}$.
\end{thm}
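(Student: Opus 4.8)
The plan is to pin down the poset $(0,\lambda)_{\Lambda^{2,2}}$ completely and then read off the homotopy type of its order complex by a join computation. The first task is to understand equality in $\Lambda^{2,2} = \langle\, a, b \mid 2a = 2b \,\rangle$. Since this monoid is cancellative, it embeds in its group completion, which is $(\Z a \oplus \Z b)/\langle 2a - 2b\rangle \cong \Z \times \Z/2$, and the element $ma + nb$ is sent to the pair $(m + n,\ m \bmod 2)$. I would verify that this assignment is a complete invariant for equality: the congruence generated by $2a \sim 2b$ clearly preserves the degree $m+n$ and the parity of $m$, and conversely two words of the same degree and the same parity of $m$ differ by an even shift of $m$, hence are connected by repeatedly trading $2a$ for $2b$ (this stays inside $\N^2$ because the degree is fixed). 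Thus $\Lambda^{2,2}$ is identified with $\{0\} \cup \{\,(d,\epsilon) : d \ge 1,\ \epsilon \in \{0,1\}\,\}$: degree $0$ carries only $0$, while each degree $d \ge 1$ carries exactly the two elements $(d,0)$ and $(d,1)$.

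Next I would describe the order. Because addition adds degrees, one checks directly that $(e,\delta) \le (d,\epsilon)$ holds exactly when $e \le d$, subject to the proviso that $e = d$ forces $\delta = \epsilon$. Consequently, for $\lambda = ma + nb$ of degree $d = m+n \ge 1$, the open interval $(0,\lambda)_{\Lambda^{2,2}}$ consists of all $(e,\delta)$ with $1 \le e \le d-1$ and $\delta \in \{0,1\}$, and two distinct such elements are comparable precisely when their degrees differ. In other words, the interval is the ordinal sum of $d-1$ copies of the two-element antichain: levels $e = 1, \dots, d-1$, each an antichain $\{(e,0),(e,1)\}$, with every element of a lower level below every element of a higher level.

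The final step is topological. The order complex of an ordinal sum of posets is the topological join of their order complexes, and the order complex of a two-element antichain is $S^0$. Using that the $k$-fold join of $S^0$ is $S^{k-1}$, I obtain
\[ \F(\lambda; \Lambda^{2,2}) = \geom{(0,\lambda)_{\Lambda^{2,2}}} \cong \underbrace{S^0 \ast \dots \ast S^0}_{d-1} \cong S^{d-2} = S^{m+n-2}. \]
The degenerate case $d = 1$ is consistent: the interval is empty, and the empty join is $S^{-1}$, matching $S^{m+n-2} = S^{-1}$.

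I expect the only delicate point to be the bookkeeping in the first step, namely confirming that $(m+n,\ m \bmod 2)$ is a \emph{complete} invariant, i.e. that the congruence generated by $2a \sim 2b$ identifies no more pairs than these; this is where cancellativity and the explicit group completion do the work. Once the interval has been recognized as an ordinal sum of $2$-antichains, the homotopy type follows formally from the join description of order complexes over ordinal sums, so no genuine obstacle remains.
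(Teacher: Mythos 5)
Your proof is correct, but it takes a genuinely different route from the paper's. The paper argues by induction on $m+n$: it writes $(0,\lambda)_{\Lambda^{2,2}} = [a,\lambda) \cup [b,\lambda)$, observes that each half-open interval is contractible (it has a minimum), and invokes the lemma that a union of two contractible subcomplexes is homotopy equivalent to the suspension of their intersection; since that intersection is $(a,\lambda) \cong (0,\lambda-a)$, this yields the recursion $\F(\lambda;\Lambda^{2,2}) \simeq \susp\,\F(\lambda-a;\Lambda^{2,2})$, and the sphere arises by iterated suspension. You instead determine the poset completely: $(m+n,\ m \bmod 2)$ is a complete invariant --- your swap argument for completeness is the right one and is self-contained, and note that the appeal to cancellativity and the group completion is dispensable (indeed slightly circular, since cancellativity of $\Lambda^{2,2}$ is most easily verified \emph{after} the normal form is in hand) --- so the open interval of an element of degree $d = m+n$ is the ordinal sum of $d-1$ two-element antichains, and the standard fact that the order complex of an ordinal sum is the join of the order complexes gives $\geom{(0,\lambda)_{\Lambda^{2,2}}} \cong S^0 \ast \dots \ast S^0 \cong S^{d-2}$, including the degenerate case $d=1$. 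What your approach buys: a non-inductive, fully structural argument and a strictly stronger conclusion, namely that the order complex is homeomorphic to the sphere (it is the boundary of the $(d-1)$-dimensional cross-polytope), not merely homotopy equivalent to it. What the paper's approach buys: the suspension recursion is exactly the technique reused for the three-generator monoids $\Lambda^{p,q,r}$ in Section~3, where the intervals no longer admit such a clean normal-form/ordinal-sum description; your method exploits the very rigid grading of $\Lambda^{2,2}$ and would not transfer as directly.
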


To prove the theorem, we use the following lemma.

\begin{lem}
  \label{lem : susp}
  Let $X_1$ and $X_2$ be subcomplexes of a CW complex of $X$.
  If both $X_1$ and $X_2$ are contractible, then we have
  \[ X_1 \cup X_2 \simeq \susp(X_1 \cap X_2), \]
  where $\susp Y$ denotes the (unreduced) suspension of a topological space $Y$.
\end{lem}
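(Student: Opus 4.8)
The plan is to prove that if $X_1$ and $X_2$ are contractible subcomplexes of a CW complex $X$, then $X_1 \cup X_2 \simeq \susp(X_1 \cap X_2)$. The key tool is the homotopy pushout: since $X_1$ and $X_2$ are subcomplexes, their intersection $X_1 \cap X_2$ is a subcomplex too, and the inclusions $X_1 \cap X_2 \hookrightarrow X_1$ and $X_1 \cap X_2 \hookrightarrow X_2$ are cofibrations. Consequently the ordinary pushout $X_1 \cup X_2$ agrees, up to homotopy, with the homotopy pushout (double mapping cylinder) of the diagram $X_1 \leftarrow X_1 \cap X_2 \rightarrow X_2$.

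First I would recall that the unreduced suspension $\susp Y$ is precisely the homotopy pushout of the diagram $\pt \leftarrow Y \rightarrow \pt$, that is, the double mapping cylinder of the two constant maps collapsing $Y$ to a point on each side. This is the defining model of $\susp Y$ as the join of $Y$ with $S^0$.

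The central step is then a comparison of two homotopy pushouts. I would invoke the gluing lemma (the homotopy invariance of the homotopy pushout): given a map of diagrams in which the vertical arrows are homotopy equivalences, the induced map on homotopy pushouts is a homotopy equivalence. Here I compare
\[ X_1 \longleftarrow X_1 \cap X_2 \longrightarrow X_2 \]
with
\[ \pt \longleftarrow X_1 \cap X_2 \longrightarrow \pt. \]
The two left-hand and right-hand vertical maps $X_i \to \pt$ are homotopy equivalences precisely because $X_1$ and $X_2$ are contractible, while the middle vertical map is the identity on $X_1 \cap X_2$. Thus the homotopy pushouts are homotopy equivalent: $X_1 \cup X_2 \simeq \susp(X_1 \cap X_2)$.

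The main subtlety to handle carefully is the passage from the strict pushout $X_1 \cup X_2$ to the homotopy pushout, which requires that the inclusions be cofibrations; this is where the hypothesis that $X_1$ and $X_2$ are \emph{subcomplexes} of a CW complex does the real work, guaranteeing the CW-pair inclusions are cofibrations so that the canonical map from the double mapping cylinder to $X_1 \cup X_2$ is a homotopy equivalence. One edge case deserves a remark: if $X_1 \cap X_2 = \varnothing$, then $\susp(\varnothing) = S^0$, and since each $X_i$ is then nonempty and contractible, $X_1 \cup X_2$ is a disjoint union of two contractible pieces, also homotopy equivalent to $S^0$, so the statement remains valid.
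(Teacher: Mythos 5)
Your proposal is correct, but there is nothing in the paper to compare it against: the paper states Lemma~\ref{lem : susp} without proof, treating it as a standard fact, and proceeds directly to the proof of Theorem~\ref{thm : case 2,2}. Your argument is the standard one and is complete: the subcomplex inclusions $X_1 \cap X_2 \hookrightarrow X_i$ are cofibrations, so the strict union computes the homotopy pushout (double mapping cylinder); the unreduced suspension is the homotopy pushout of $\pt \leftarrow X_1 \cap X_2 \rightarrow \pt$; and the gluing lemma applied to the evident map of diagrams, with $X_i \to \pt$ homotopy equivalences by contractibility, yields $X_1 \cup X_2 \simeq \susp(X_1 \cap X_2)$. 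Your attention to the case $X_1 \cap X_2 = \varnothing$ is not idle pedantry: the paper genuinely invokes the lemma in that degenerate situation (e.g., for $\F(2a; \Lambda^{2,2})$, where $[a,\lambda) \cap [b,\lambda) = \varnothing$ and the conclusion is $S^0$), and your double-mapping-cylinder model of $\susp Y$ gives $\susp \varnothing = \pt \sqcup \pt = S^0$, exactly as needed. One small caveat: contractibility of the $X_i$ in particular forces them to be nonempty, which is what makes the empty-intersection case come out to $S^0$ on both sides; you implicitly use this and it would be worth one sentence to say so.
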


\begin{proof}[Proof of Theorem~\ref{thm : case 2,2}]
  The proof is by induction on $m + n$.
  If $m + n = 1$, i.e. $\lambda = a$ or $\lambda = b$, we have
  \[ \F(\lambda; \Lambda^{2,2}) = \geom{(0, \lambda)_{\Lambda^{2,2}}}
    = \geom \emptyset = S^{-1}. \]
  Let $m + n \ge 2$, and assume that $m \ge 1$.
  Since $\Lambda^{2,2}$ is generated by $a$ and $b$, we have
  \[ \mu < \nu \iff \mu + a \le \nu \text{ or } \mu + b \le \nu \]
  for any $\mu, \nu \in \Lambda^{2,2}$.
  Thus we obtain $(0, \lambda)_{\Lambda^{2,2}} = [a, \lambda)_{\Lambda^{2,2}} \cup [b, \lambda)_{\Lambda^{2, 2}}$.
  Moreover, since any chain in $(0, \lambda)_{\Lambda^{2,2}}$ is contained
  in either $[a, \lambda)_{\Lambda^{2,2}}$ or $[b, \lambda)_{\Lambda^{2,2}}$, we have
  \[ \F(\lambda, \Lambda^{2,2}) = \geom{(0, \lambda)_{\Lambda^{2,2}}}
  = \geom{[a, \lambda)_{\Lambda^{2,2}}} \cup \geom{[b, \lambda)_{\Lambda^{2, 2}}}. \]
  By Lemma~\ref{lem : susp}, we obtain
  \[ \geom{[a, \lambda)_{\Lambda^{2,2}}} \cup \geom{[b, \lambda)_{\Lambda^{2, 2}}}
    \simeq \susp(\geom{[a, \lambda)_{\Lambda^{2,2}}} \cap \geom{[b, \lambda)_{\Lambda^{2, 2}}}), \]
  since $a$ and $b$ are the minimal element of $[a, \lambda)_{\Lambda^{2,2}}$ and $[b, \lambda)_{\Lambda^{2,2}}$ respectively.
  It is easy to check that
  \[ a \le \mu \text{ and } b \le \mu \iff a + a \le \mu \text{ or } a + b \le \mu \]
  for any $\mu \in \Lambda^{2,2}$. Thus we have
  \begin{align*}
    \geom{[a, \lambda)_{\Lambda^{2,2}}} \cap \geom{[b, \lambda)_{\Lambda^{2, 2}}}
      &= \geom{[a, \lambda)_{\Lambda^{2,2}} \cap [b, \lambda)_{\Lambda^{2, 2}}}
      \\&= \geom{[a + a, \lambda)_{\Lambda^{2,2}} \cup [a + b, \lambda)_{\Lambda^{2, 2}}}
      \\&= \geom{(a, \lambda)_{\Lambda^{2,2}}}
      \\&\cong \geom{(0, \lambda - a)_{\Lambda^{2,2}}}.
      \\&= \F(\lambda - a; \Lambda^{2,2})
  \end{align*}
  Note that the map on $\Lambda^{2,2}$ which sends $\mu$ to $\mu + a$ induces
  a poset isomorphism $(0, \lambda - a)_{\Lambda^{2,2}} \cong (a, \lambda)_{\Lambda^{2,2}}$.
  By the inductive assumption, we conclude
  \[ \F(m a + n b; \Lambda^{2,2}) \simeq \susp \F((m - 1) a + n b; \Lambda^{2,2}) \simeq \susp S^{(m - 1) + n - 2}
    \cong S^{m + n - 2}. \]

  If $m = 0$, then we also have
  \[ \F(n b; \Lambda^{2,2}) = \F(2 a + (n - 2) b; \Lambda^{2,2}) \simeq S^{2 + (n-2) - 2} = S^{n-2}. \qedhere \]
\end{proof}

\section{Submonoids of $\N^2$ with three generators}
\label{sec : three gen}

For positive integers $p, q$ and $r$, define
\[ \Lambda^{p,q,r} = \langle\, a, b, c \mid p a + q b = r c \,\rangle, \]
where the right hand side means the quotient monoid of the free additive monoid $\N a \oplus \N b \oplus \N c$
by the equivalence relation $\sim$ generated by
\[ \lambda + p a + q b \sim \lambda + r c \qquad (\lambda \in \N a \oplus \N b \oplus \N c). \]

\begin{prp}
  \label{prp : 3 gen quotient}
  Let $u, v, w \in \N^2$ and any two of $u, v, w$ are linearly independent.
  Then the submonoid $\N u + \N v + \N w$ of $\N^2$ generated by $u, v$ and $w$
  is isomorphic to $\Lambda^{p,q,r}$ for some positive integers $p, q$ and $r$.
\end{prp}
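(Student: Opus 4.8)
The plan is to mimic the structure of the two-generator case. There I would produce an explicit monoid homomorphism $f \colon \Lambda^{p,q,r} \to \N^2$ sending the generators $a, b, c$ to $u, v, w$, and then argue that for a suitable choice of $p, q, r$ this map is well-defined and injective, so that it induces an isomorphism onto its image $\N u + \N v + \N w$. The main work is therefore to choose $p, q, r$ correctly and to verify injectivity.

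First I would identify the single relation. Since any two of $u, v, w$ are linearly independent, the three vectors are pairwise non-proportional, but three vectors in $\N^2$ must satisfy at least one nontrivial linear dependence over $\Q$. Concretely, I would find positive integers $\alpha, \beta, \gamma$ (up to sign) with $\alpha u + \beta v = \gamma w$, or more symmetrically a relation among $u, v, w$ with integer coefficients; the pairwise independence guarantees that in any minimal $\N$-relation all three generators appear with nonzero coefficient, and after moving terms to opposite sides one obtains an expression of the form $p u + q v = r w$ with $p, q, r$ positive. Dividing out by the $\gcd$ of the coefficients ensures the relation is primitive, which will be needed for injectivity. I would then define $f$ on generators by $a \mapsto u$, $b \mapsto v$, $c \mapsto w$; well-definedness is immediate because $p u + q v = r w$ shows $f$ respects the defining relation $p a + q b = r c$.

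The hard part will be injectivity of $f$, i.e. showing that the congruence $\sim$ on $\N a \oplus \N b \oplus \N c$ generated by $pa + qb \sim rc$ is \emph{exactly} the kernel congruence of the map to $\N^2$, and not coarser. Equivalently, I must show that whenever two words $\lambda, \lambda'$ in the free monoid have the same image in $\N^2$, they are connected by applications of the single relation. The key point is that $\{p, q, r\}$ was chosen so that $pa + qb = rc$ is the \emph{only} (primitive) relation among the images; since the difference $\lambda - \lambda'$ lies in the rank-one lattice of integer relations, it is an integer multiple of $(p, q, -r)$, and I can rewrite $\lambda$ to $\lambda'$ by repeatedly substituting $rc$ for $pa + qb$ (or vice versa), staying inside $\N^3$ throughout. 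Verifying that such a rewriting can always be carried out without leaving $\N^3$ is the one genuinely technical step, and it rests on the positivity of $p, q, r$.

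Finally, once $f$ is shown to be well-defined and injective, it induces a monoid isomorphism $\Lambda^{p,q,r} \cong \Im f = \N u + \N v + \N w$, completing the proof. I expect the argument to be short modulo the injectivity verification, exactly paralleling the proof of the analogous statement for $\Lambda^{p,q}$ earlier in the paper.
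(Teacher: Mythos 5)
Your proposal matches the paper's proof essentially step for step: the paper likewise takes a primitive generator $(m_0,n_0,k_0)$ of the rank-one relation lattice $\{(m,n,k)\in\Z^3 \mid mu+nv+kw=0\}$, uses pairwise independence and $u,v,w\in\N^2$ to arrange (after permutation and sign choice) $p=m_0,\ q=n_0,\ r=-k_0>0$, and then defines $f\colon\Lambda^{p,q,r}\to\N^2$ on generators and concludes via $\Lambda^{p,q,r}\cong\Im f$. The only difference is that the paper simply asserts injectivity of $f$, whereas you sketch the rewriting argument behind it (differences of equal-image words are integer multiples of $(p,q,-r)$, and substitutions stay in $\N^3$ by positivity), which is a correct filling-in rather than a different route.
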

\begin{proof}
  Take the generator $(m_0, n_0, k_0)$ of $\{\, (m, n, k) \in \Z^3 \mid m u + n v + k w = 0 \,\}$
  as $\Z$-module.
  Since any two of $u, v, w$ are linearly independent, each of $m_0, n_0, k_0$ not equals to zero.
  Moreover, since $u, v, w \in \N^2$, one of $m_0, n_0, k_0$ has a different signature from the others.
  By permutation on $u, v, w$ and choice of the generator,
  we can assume that $m_0, n_0 > 0$ and $k_0 < 0$.
  Set $p = m_0$, $q = n_0$, $r = -k_0$, and
  consider the monoid homomorphism $f : \Lambda^{p,q,r} \to \N^2$
  which sends $a, b$ and $c$ to $u, v$ and $w$ respectively.
  Then $f$ is well-defined and injective.
  Hence $f$ induces a monoid isomorphism $\Lambda^{p,q,r} \cong \Im f = \N u + \N v + \N w$.
\end{proof}

By the previous proposition, it is enough to consider the case $\Lambda = \Lambda^{p,q,r}$
for positive integers $p, q$ and $r$.
When $r = 1$, obviously we have $\Lambda^{p,q,1} \cong \N a \oplus \N b$.
In this case, the Frobenius complex is easily determined.
\begin{prp}
  \label{prp : p,q,1}
  The Frobenius complex of $\N a \oplus \N b$ for $m a + n b \in (\N a \oplus \N b)_{+}$ satisfies
  \[ \F(m a + n b; \N a \oplus \N b) \simeq
    \begin{cases}
      S^{-1} & \text{if $(m, n) = (1, 0)$ or $(0, 1)$,} \\
      S^0 & \text{if $(m, n) = (1, 1)$,} \\
      \pt & \text{if either $m \ge 2$ or $n \ge 2$.}
    \end{cases} \]
\end{prp}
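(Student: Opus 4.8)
The plan is to describe the open interval concretely, settle the three small cases by direct inspection, and obtain contractibility in the remaining case by exhibiting a downward closure operator and appealing to Lemma~\ref{lem : downward}. Since the order on $\N a \oplus \N b$ is the componentwise order, the open interval $(0, m a + n b)_{\N a \oplus \N b}$ consists exactly of the elements $i a + j b$ with $0 \le i \le m$ and $0 \le j \le n$, with the two corners $0$ and $m a + n b$ removed.

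For $(m, n) = (1, 0)$ or $(0, 1)$ this interval is empty, so its order complex is $\geom{\emptyset} = S^{-1}$. For $(m, n) = (1, 1)$ the interval is $\{a, b\}$, two mutually incomparable elements, so its order complex is a pair of points, i.e.\ $S^0$. These are the only cases in which the interval is too small for the closure-operator argument to apply, which is consistent with the conclusion, since $S^{-1}$ and $S^0$ are not contractible.

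Now suppose $m \ge 2$; the case $n \ge 2$ is entirely symmetric. The key step is to define the coordinate-truncating map $f : \N a \oplus \N b \to \N a \oplus \N b$ by $f(i a + j b) = \min\{i, 1\}\,a + j b$. I would then check the three routine properties that make $f$ a downward closure operator: it is order-preserving, it satisfies $f \le \id$ (since $\min\{i,1\} \le i$), and $f^2 = f$ (since $\min\{\min\{i,1\},1\} = \min\{i,1\}$). I would also verify that $f^{-1}(0) = \{0\}$, which holds because $\min\{i,1\}\,a + j b = 0$ forces $i = j = 0$. With $a = 0$ and $b = m a + n b$ in the notation of Lemma~\ref{lem : downward}, the hypotheses $0 < m a + n b$ and $f^{-1}(0) = \{0\}$ are met, and the hypothesis $m \ge 2$ guarantees that $f(m a + n b) = a + n b \neq m a + n b$, so $m a + n b \notin \Fix f$. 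Lemma~\ref{lem : downward} then yields $\F(m a + n b; \N a \oplus \N b) = \geom{(0, m a + n b)_{\N a \oplus \N b}} \simeq \pt$, and the symmetric operator $i a + j b \mapsto i a + \min\{j, 1\}\,b$ handles the case $n \ge 2$.

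I do not expect a genuine obstacle here: all the content is packaged into the choice of the truncating closure operator, after which Lemma~\ref{lem : downward} does the work. The only place the hypothesis is used is in the observation that the top element $m a + n b$ fails to be fixed by $f$ precisely when $m \ge 2$ (respectively $n \ge 2$), which is what separates the contractible case from the spherical cases $(1,0)$, $(0,1)$, and $(1,1)$.
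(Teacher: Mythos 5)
Your proposal is correct and takes essentially the same approach as the paper: both arguments rest on Lemma~\ref{lem : downward} applied to a coordinate-truncating downward closure operator. The only difference is organizational --- the paper uses the single operator $f(m a + n b) = \min\{m, 1\} a + \min\{n, 1\} b$, whose fixed-point set $\{0, a, b, a + b\}$ delivers all three cases in one application, whereas you truncate one coordinate at a time (invoking the lemma only in the contractible case) and settle the spherical cases $(1,0)$, $(0,1)$, $(1,1)$ by direct inspection of the open interval.
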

\begin{proof}
  Consider the map $f : \N a \oplus \N b \to \N a \oplus \N b$ defined by
  \[ f(m a + n b) = \min\{m, 1\} a + \min\{n, 1\} b \qquad (m, n \in \N). \]
  Then $f$ is a downward closure operator and satisfies $f^{-1}(0) = \{0\}$ and
  \[ \Fix f = \{ 0, a, b, a + b \}. \]
  By Lemma~\ref{lem : downward}, we obtain the desired conclusion.
\end{proof}

We now turn to the case $r \ge 2$.
For positive integers $p, q, r, s, t$ and $u$,
define the transition map $T^{p,q,r}_{s,t,u} : \Lambda^{p,q,r} \to \Lambda^{s,t,u}$ by
\[ T^{p,q,r}_{s,t,u}(m a + n b + k c) = \tau^p_s(m) a + \tau^q_t(n) b + \tau^r_u(k) c
  \qquad (m, n, k \in \N). \]
Proposition~\ref{prp : trans func} implies the following properties.

\begin{prp}
  The transition maps satisfy the following:
  \begin{enumerate}
    \item $T^{p,q,r}_{s,t,u}$ is well-defined,
    \item $T^{p,q,r}_{s,t,u}$ is order-preserving,
    \item $T^{s,t,u}_{p,q,r} \circ T^{p,q,r}_{s,t,u} \le \id_{\Lambda^{p,q,r}}$,
    \item $T^{s,t,u}_{p,q,r} \circ T^{p,q,r}_{s,t,u} = \id_{\Lambda^{p,q,r}}$
      if $p \le s$, $q \le t$ and $r \le u$, and
    \item $(T^{p,q,r}_{s,t,u})^{-1}(0) = \{0\}$
      if $(p, s), (q, t), (r, u) \in \{1\}^2 \cup (\N_{\ge 2})^2$.
  \end{enumerate}
\end{prp}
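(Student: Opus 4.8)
The plan is to derive each of the five properties from the corresponding coordinatewise statement in Proposition~\ref{prp : trans func}, since $T^{p,q,r}_{s,t,u}$ acts on the three coordinates of $m a + n b + k c$ by the transition functions $\tau^p_s$, $\tau^q_t$, $\tau^r_u$ independently. The whole argument runs parallel to the two-generator case of Proposition~\ref{prp : trans map}, so I would only spell out the points where the extra generator changes something.

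The crux is well-definedness~(1), and this is the one place where the defining relations of source and target genuinely interact rather than splitting into coordinatewise bookkeeping. The map is first defined on the free monoid $\N a \oplus \N b \oplus \N c$ by the given formula, and I must check that it respects the generating relation $\lambda + p a + q b \sim \lambda + r c$. Writing $\lambda = m a + n b + k c$, property~(1) of Proposition~\ref{prp : trans func} gives $\tau^p_s(m+p) = \tau^p_s(m)+s$ and $\tau^q_t(n+q) = \tau^q_t(n)+t$, so $T^{p,q,r}_{s,t,u}(\lambda + p a + q b)$ exceeds $T^{p,q,r}_{s,t,u}(\lambda)$ by $s a + t b$; likewise $\tau^r_u(k+r) = \tau^r_u(k)+u$ shows $T^{p,q,r}_{s,t,u}(\lambda + r c)$ exceeds it by $u c$. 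Since $s a + t b = u c$ is exactly the defining relation of $\Lambda^{s,t,u}$, the two images coincide, and the map descends to the quotient.

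For order-preservation~(2), I would note that $\lambda \le \mu$ in $\Lambda^{p,q,r}$ precisely when $\mu$ is obtained from $\lambda$ by repeatedly adding generators, so by transitivity it suffices to check $T^{p,q,r}_{s,t,u}(\lambda) \le T^{p,q,r}_{s,t,u}(\lambda + g)$ for $g \in \{a,b,c\}$; taking the representative in which adding $g$ raises the matching coordinate by one, this reduces to $\tau^p_s(m) \le \tau^p_s(m+1)$ and its two analogues, which is property~(2). For~(3) and~(4) I would compose coordinatewise: $T^{s,t,u}_{p,q,r} \circ T^{p,q,r}_{s,t,u}$ acts as $\tau^s_p \circ \tau^p_s$, $\tau^t_q \circ \tau^q_t$, $\tau^u_r \circ \tau^r_u$ on the three coordinates. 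Property~(3) makes each coordinate at most the original, so the composite is $\le \id$ in the coordinatewise order and hence in the monoid order; and when $p \le s$, $q \le t$, $r \le u$, property~(4) turns each into an equality, giving the identity.

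Finally, for~(5) I would first record that $0$ is isolated in $\Lambda^{s,t,u}$: since $s,t,u \ge 1$, neither generating move applies at $(0,0,0)$ (equivalently, the positive functional $(x,y,z) \mapsto u x + u y + (s+t) z$ is invariant under the relation and vanishes only at the origin), so $0$ has the unique representative $(0,0,0)$. Hence $T^{p,q,r}_{s,t,u}(m a + n b + k c) = 0$ forces $\tau^p_s(m) = \tau^q_t(n) = \tau^r_u(k) = 0$, and under the hypothesis $(p,s),(q,t),(r,u) \in \{1\}^2 \cup (\N_{\ge 2})^2$ property~(5) gives $m = n = k = 0$. I expect well-definedness to be the main obstacle, exactly because it is the only step that uses the compatibility of the two defining relations rather than a purely coordinatewise fact.
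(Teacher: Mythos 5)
Your proposal is correct and follows exactly the route the paper intends: the paper gives no written proof, saying only that Proposition~\ref{prp : trans func} implies these properties, and your coordinatewise derivation — with the well-definedness check that $T^{p,q,r}_{s,t,u}(\lambda + pa + qb)$ and $T^{p,q,r}_{s,t,u}(\lambda + rc)$ differ from $T^{p,q,r}_{s,t,u}(\lambda)$ by $sa + tb$ and $uc$ respectively, which coincide by the defining relation of $\Lambda^{s,t,u}$ — is precisely the intended argument. Your additional observation in (5) that $0 \in \Lambda^{s,t,u}$ has the unique representative $(0,0,0)$ (via the invariant positive functional) fills in a point the paper leaves implicit, and correctly so.
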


It is enough to consider the case $p \le q$
since $\Lambda^{p,q,r}$ is isomorphic to $\Lambda^{q,p,r}$
by the map $\Lambda^{p,q,r} \to \Lambda^{q,p,r}$ which sends $a, b$ and $c$ to $b, a$ and $c$ respectively.

\begin{thm}
  \label{thm : p,q,r to 2,2,2}
  Let $p, q$ and $r$ be positive integers with $p \le q$ and $r \ge 2$.
  Then the Frobenius complex of $\Lambda^{p,q,r}$ for $\lambda \in \Lambda^{p,q,r}_{+}$ satisfies
  the following:
  \begin{enumerate}
    \item If $p = q = 1$,
      \[ \F(\lambda; \Lambda^{1,1,r}) \simeq
        \begin{cases}
          \F(T^{1,1,r}_{1,1,2}(\lambda); \Lambda^{1,1,2})
          & \text{if $\lambda \in \Fix(T^{1,1,2}_{1,1,r} \circ T^{1,1,r}_{1,1,2})$,} \\
          \pt & \text{otherwise}.
        \end{cases} \]
    \item If $p = 1$ and $q \ge 2$,
      \[ \F(\lambda; \Lambda^{1,q,r}) \simeq
        \begin{cases}
          \F(T^{1,q,r}_{1,2,2}(\lambda); \Lambda^{1,2,2})
          & \text{if $\lambda \in \Fix(T^{1,2,2}_{1,q,r} \circ T^{1,q,r}_{1,2,2})$,} \\
          \pt & \text{otherwise}.
        \end{cases} \]
    \item If $2 \le p \le q$,
      \[ \F(\lambda; \Lambda^{p,q,r}) \simeq
        \begin{cases}
          \F(T^{p,q,r}_{2,2,2}(\lambda); \Lambda^{2,2,2})
          & \text{if $\lambda \in \Fix(T^{2,2,2}_{p,q,r} \circ T^{p,q,r}_{2,2,2})$,} \\
          \pt & \text{otherwise}.
        \end{cases} \]
  \end{enumerate}
\end{thm}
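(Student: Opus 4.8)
The plan is to run the argument of Theorem~\ref{thm : p,q to 2,2} uniformly in all three cases, the only difference being the target monoid. Write $(s,t,u)$ for the target exponents, that is $(1,1,2)$ in case (1), $(1,2,2)$ in case (2), and $(2,2,2)$ in case (3), and put $g = T^{s,t,u}_{p,q,r} \circ T^{p,q,r}_{s,t,u}$, an endomorphism of $\Lambda^{p,q,r}$. In each case $g$ is exactly the composite appearing in the fixed-point condition of the statement, so it suffices to analyze $\geom{(0,\lambda)_{\Lambda^{p,q,r}}}$ through $g$.

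First I would verify that $g$ is a downward closure operator with $g^{-1}(0) = \{0\}$. It is order-preserving as a composite of order-preserving maps, while $g \le \id_{\Lambda^{p,q,r}}$ and $g^2 = g$ come from parts (2)--(4) of the transition-map properties established above. The reason for splitting into three cases is precisely that, with these targets, each coordinate pair $(p,s)$, $(q,t)$, $(r,u)$ lies in $\{1\}^2 \cup (\N_{\ge 2})^2$; this is the hypothesis of part (5), which gives $(T^{p,q,r}_{s,t,u})^{-1}(0) = \{0\}$ and $(T^{s,t,u}_{p,q,r})^{-1}(0) = \{0\}$, and hence $g^{-1}(0) = \{0\}$. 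Applying Lemma~\ref{lem : downward} with $a = 0$ and $b = \lambda$ then gives
\[ \F(\lambda; \Lambda^{p,q,r}) = \geom{(0, \lambda)_{\Lambda^{p,q,r}}} \simeq
  \begin{cases}
    \geom{(0, \lambda)_{\Fix g}} & \text{if $\lambda \in \Fix g$,} \\
    \pt & \text{otherwise,}
  \end{cases} \]
which already yields the ``$\pt$'' branch of each case.

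It remains to identify $\geom{(0,\lambda)_{\Fix g}}$ with a Frobenius complex of $\Lambda^{s,t,u}$ when $\lambda \in \Fix g$. For this I would show, as in Theorem~\ref{thm : p,q to 2,2}, that the transition maps induce a poset isomorphism $\Fix g \cong \Lambda^{s,t,u}$. In each case the inequalities $s \le p$, $t \le q$, $u \le r$ hold, so part (4) of the transition-map properties gives $T^{p,q,r}_{s,t,u} \circ T^{s,t,u}_{p,q,r} = \id_{\Lambda^{s,t,u}}$. Thus $T^{s,t,u}_{p,q,r}$ is a section of $T^{p,q,r}_{s,t,u}$, its image is exactly $\Fix g$, and $T^{p,q,r}_{s,t,u}$ restricts to an order isomorphism $\Fix g \cong \Lambda^{s,t,u}$ sending $0$ to $0$ and $\lambda$ to $T^{p,q,r}_{s,t,u}(\lambda)$. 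Transporting the open interval along this isomorphism gives $\geom{(0,\lambda)_{\Fix g}} \cong \F(T^{p,q,r}_{s,t,u}(\lambda); \Lambda^{s,t,u})$, which completes each case.

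Since the whole scheme is a three-generator transcription of Theorem~\ref{thm : p,q to 2,2}, I expect no conceptual obstacle beyond bookkeeping. The one genuinely load-bearing point is the case split itself: a uniform target would force some coordinate pair to mix an exponent $1$ with an exponent $\ge 2$, where $(\tau^p_q)^{-1}(0) = \{0\}$ fails, and then $g^{-1}(0) = \{0\}$ --- the hypothesis that drives Lemma~\ref{lem : downward} --- could break down.
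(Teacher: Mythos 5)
Your proposal is correct and matches the paper's own proof essentially step for step: the paper likewise sets the target exponents $s = \min\{2,p\}$, $t = \min\{2,q\}$, $u = 2$, verifies that $T^{s,t,u}_{p,q,r} \circ T^{p,q,r}_{s,t,u}$ is a downward closure operator with $(T^{s,t,u}_{p,q,r} \circ T^{p,q,r}_{s,t,u})^{-1}(0) = \{0\}$, applies Lemma~\ref{lem : downward}, and identifies $\Fix(T^{s,t,u}_{p,q,r} \circ T^{p,q,r}_{s,t,u})$ with $\Lambda^{s,t,u}$ via the mutually inverse restrictions of the transition maps. Your added justifications (idempotence via part (4) with the roles of $(p,q,r)$ and $(s,t,u)$ swapped, and the observation that the case split is exactly what puts each coordinate pair in $\{1\}^2 \cup (\N_{\ge 2})^2$ so part (5) applies) are accurate and, if anything, slightly more explicit than the paper.
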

\begin{proof}
  Let $s = \min\{2, p\}$ and $t = \min\{2, q\}$.
  By Proposition~\ref{prp : trans func},
  $T^{s,t,2}_{p,q,r} \circ T^{p,q,r}_{s,t,2}$ is a downward closure operator on $\Lambda^{p,q,r}$
  and satisfies $(T^{s,t,2}_{p,q,r} \circ T^{p,q,r}_{s,t,2})^{-1}(0) = \{0\}$.
  By Lemma~\ref{lem : downward}, we obtain
  \[ \F(\lambda; \Lambda^{p,q,r}) = \geom{(0, \lambda)_{\Lambda^{p,q,r}}} \simeq
    \begin{cases}
      \geom{(0, \lambda)_{\Fix (T^{s,t,2}_{p,q,r} \circ T^{p,q,r}_{s,t,2})}}
      & \text{if $\lambda \in \Fix (T^{s,t,2}_{p,q,r} \circ T^{p,q,r}_{s,t,2})$,} \\
      \pt & \text{otherwise.}
    \end{cases} \]
  Assume that $\lambda \in \Fix (T^{s,t,2}_{p,q,r} \circ T^{p,q,r}_{s,t,2})$.
  Since $T^{p,q,r}_{s,t,2}$ and $T^{s,t,2}_{p,q,r}$ induce a poset isomorphism
  $\Fix (T^{s,t,2}_{p,q,r} \circ T^{p,q,r}_{s,t,2}) \cong \Lambda^{s,t,2}$, we have
  \[ \geom{(0, \lambda)_{\Fix (T^{s,t,2}_{p,q,r} \circ T^{p,q,r}_{s,t,2})}}
    \simeq \geom{(T^{p,q,r}_{s,t,2}(0), T^{p,q,r}_{s,t,2}(\lambda))_{\Lambda^{s,t,2}}}
    \simeq \F(T^{p,q,r}_{s,t,2}(\lambda); \Lambda^{s,t,2}). \qedhere \]
\end{proof}

Note that for any element $\lambda$ of $\Lambda^{p,q,r}$
there uniquely exist non-negative integers $m, n$ and $k$
satisfying $\lambda = m a + n b + k c$ and either $m \le p - 1$ or $n \le q - 1$.

\begin{thm}
  \label{thm : case 2,2,2}
  The Frobenius complexes of $\Lambda^{1,1,2}, \Lambda^{1,2,2}$ and $\Lambda^{2,2,2}$ satisfy the following:
  \begin{enumerate}
    \item For $m a + n b + k c \in \Lambda^{1,1,2}_{+}$ with $m = 0$ or $n = 0$,
      \[ \F(m a + n b + k c; \Lambda^{1,1,2}) \simeq
        \begin{cases}
          S^{-1} & \text{if $m = n = 0$ and $k = 1$,} \\
          S^{k - 2} \vee S^{k - 2} & \text{if $m = n = 0$ and $k \ge 2$,} \\
          S^{k - 1} & \text{if $(m, n) = (1, 0)$ or $(0, 1)$, and} \\
          \pt & \text{otherwise.}
        \end{cases} \]
    \item For $m a + n b + k c \in \Lambda^{1,2,2}_{+}$ with $m = 0$ or $n \le 1$,
      \[ \F(m a + n b + k c; \Lambda^{1,2,2}) \simeq
        \begin{cases}
          S^{m + n + k - 2} & \text{if $m \le 1$ and $n \le 1$, and} \\
          \pt & \text{otherwise.}
        \end{cases} \]
    \item For $m a + n b + k c \in \Lambda^{2,2,2}_{+}$ with $m \le 1$ or $n \le 1$,
      \[ \F(m a + n b + k c; \Lambda^{2,2,2}) \simeq
        \begin{cases}
          S^{m + n + k - 2} & \text{if $m \le 1$ and $n \le 1$, and} \\
          \pt & \text{otherwise.}
        \end{cases} \]
  \end{enumerate}
\end{thm}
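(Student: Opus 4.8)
The plan is to compute each order complex $\geom{(0,\lambda)}$ directly, by induction on the total degree $m+n+k$ of the canonical form, using the suspension Lemma~\ref{lem : susp} as the main engine, in the same spirit as the proof of Theorem~\ref{thm : case 2,2}. The starting point is that in each of $\Lambda^{1,1,2}$, $\Lambda^{1,2,2}$ and $\Lambda^{2,2,2}$ the generators $a,b,c$ are exactly the atoms, so every nonzero element lies above at least one of them, and the minimum of any chain in $(0,\lambda)$ lies above a single generator. Hence the order complex decomposes as a union of (at most) three cones,
\[ \geom{(0,\lambda)} = \geom{[a,\lambda)} \cup \geom{[b,\lambda)} \cup \geom{[c,\lambda)}, \]
each $\geom{[g,\lambda)}$ being contractible when nonempty since $[g,\lambda)$ has minimum $g$. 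The whole computation amounts to understanding this union.

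The engine is a peeling step that removes the generator $c$ appearing on the right of the relation. Assuming $c<\lambda$, I would set $X_2=\geom{[c,\lambda)}$ and let $X_1$ be the union of the other two cones. First I would show $X_1$ is contractible: if one of $\geom{[a,\lambda)},\geom{[b,\lambda)}$ is empty then $X_1$ is a single cone, and otherwise their intersection is the principal coideal $\geom{[a\vee b,\lambda)}$, so an internal application of Lemma~\ref{lem : susp} gives $X_1\simeq\susp(\text{contractible})\simeq\pt$ provided $a\vee b$ exists and lies strictly below $\lambda$. Granting this, Lemma~\ref{lem : susp} yields $\geom{(0,\lambda)}\simeq\susp(X_1\cap X_2)$. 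A short computation with the relation shows that the minimal common upper bounds of $\{a,c\}$ are $a+c$ and $2c$, and those of $\{b,c\}$ are $b+c$ and $2c$, all of which lie above $c$; translating by $\mu\mapsto\mu-c$ then turns the intersection condition ``$\ge c$ and ($\ge a$ or $\ge b$)'' into ``$\ge a$ or $\ge b$ or $\ge c$'', i.e. ``$>0$''. This identifies $X_1\cap X_2\cong(0,\lambda-c)$ and gives the recursion $\F(\lambda)\simeq\susp\F(\lambda-c)$, which raises the sphere dimension by one and explains the exponent $m+n+k-2$.

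The one place where the three monoids genuinely diverge, and the main obstacle, is precisely when the contractibility of $X_1$ fails. In $\Lambda^{1,1,2}$ the relation reads $a+b=2c$, so $a\vee b=2c$; when $\lambda=2c$ the cones $\geom{[a,\lambda)}$ and $\geom{[b,\lambda)}$ are both nonempty but meet in $\geom{[2c,2c)}=\geom{\emptyset}$, so $X_1\simeq S^0$ is not contractible and the peeling breaks. Here I would instead read off $(0,2c)$ as the three-element antichain $\{a,b,c\}$, whose realization is $S^0\vee S^0$, and then suspend along the $c$-direction to obtain the wedges $S^{k-2}\vee S^{k-2}$ of part~(1). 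In $\Lambda^{1,2,2}$ and $\Lambda^{2,2,2}$ the relations are $a+2b=2c$ and $2a+2b=2c$, so $a+b\neq 2c$, hence $a\vee b=a+b<2c$ and $\geom{[a+b,2c)}$ is nonempty; the peeling never degenerates and the intervals are single spheres, giving the uniform answer $S^{m+n+k-2}$ of parts~(2) and~(3).

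Finally I would dispose of the base and contractible cases. The atoms $a,b,c$ give $\F=\geom{\emptyset}=S^{-1}$, and the $k=0$ sphere case $\lambda=a+b$ is a pair of disjoint cones, hence $S^0$. When only one generator lies below $\lambda$ — which is exactly the situation yielding the canonical forms with $m\ge2$ (or $n\ge2$) listed under ``otherwise'' — the interval $(0,\lambda)$ is a single cone, so $\F\simeq\pt$; and once a peeling step lands on such a contractible interval, every further suspension of a contractible space stays contractible, so these entries remain $\pt$. Assembling the recursion with these bases along the induction on $m+n+k$ delivers all three formulas. I expect the routine but delicate part to be the bookkeeping of which generators lie below $\lambda$ at each step (equivalently, which of the three cones are nonempty), since this is what selects between the sphere, wedge and point outcomes; the single conceptually essential phenomenon is the collapse $a+b=2c$ in $\Lambda^{1,1,2}$ described above.
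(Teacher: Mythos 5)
Your main engine coincides with the paper's own proof: the same decomposition $\geom{(0,\lambda)} = \geom{[a,\lambda)} \cup \geom{[b,\lambda)} \cup \geom{[c,\lambda)}$, the same double application of Lemma~\ref{lem : susp} (first to the $a$- and $b$-cones via $[a,\lambda)\cap[b,\lambda)=[a+b,\lambda)$, then against the $c$-cone via the equivalence ``($\ge a$ or $\ge b$) and $\ge c$ iff $>c$''), yielding the identical recursion $\F(\lambda)\simeq\susp\F(\lambda-c)$ when $c<\lambda$ and $a+b<\lambda$, and the same exceptional base $\F(2c;\Lambda^{1,1,2})=\geom{\{a,b,c\}}\cong S^0\vee S^0$ coming from the collapse $a+b=2c$. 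Where you genuinely depart from the paper is the treatment of the $\pt$-cases: the paper disposes of them wholesale by exhibiting explicit downward closure operators (e.g.\ $h(ma+nb+kc)=\min\{m,n+1\}a+\min\{m+1,n\}b+kc$ on $\Lambda^{2,2,2}$, with $\Fix h=\{\abs{m-n}\le 1\}$) and invoking Lemma~\ref{lem : downward}, whereas you propose to reach them from $k=0$ base cases and propagate contractibility through the suspensions.

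That alternative route is viable, but your justification of its base cases contains a concrete error: the claim that the ``otherwise'' canonical forms are \emph{exactly} those $\lambda$ with only one generator below it, and that $(0,\lambda)$ is then a single cone, is false in both directions. In $\Lambda^{2,2,2}$ (or $\Lambda^{1,2,2}$) take $\lambda=2a+b$: this is a $\pt$-case, yet both $a\le\lambda$ and $b\le\lambda$, and $(0,\lambda)=\{a,\,b,\,2a,\,a+b\}$ has two minimal and two maximal elements, so it is no cone (its realization is a path --- contractible, but not for your stated reason). Likewise in $\Lambda^{1,1,2}$, for $\lambda=ma+kc$ with $m,k\ge 2$ one has $ma+kc=(m+1)a+b+(k-2)c$, so $b\le\lambda$ and more than one generator sits below $\lambda$ at intermediate stages of your peeling. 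The repair lives entirely inside your own framework: whenever $a,b\le\lambda$ and $a+b<\lambda$ but $c\not\le\lambda$, your interior application of Lemma~\ref{lem : susp} already gives $(0,\lambda)=\geom{[a,\lambda)}\cup\geom{[b,\lambda)}\simeq\susp\geom{[a+b,\lambda)}\simeq\susp\pt\simeq\pt$, and combined with the peeling (when $c<\lambda$) and the honest single-cone cases ($\lambda=ma$, $nb$) this recovers every $\pt$-entry --- indeed this is precisely how the paper itself handles $\F(2a+b+kc;\Lambda^{2,2,2})\simeq\pt$. So you must replace the ``one generator below $\lambda$'' dichotomy by a genuine case analysis of which cones are nonempty and whether $a+b<\lambda$ at each step (and, while you are at it, verify the tacitly used suprema facts, namely that $a,b\le\mu$ forces $a+b\le\mu$ and that the minimal upper bounds of $\{a,c\}$ and $\{b,c\}$ are as you list); as written, the proposal's bookkeeping for the contractible entries does not go through.
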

\begin{proof}
  The proof is by induction on $k$.
  We first show the inductive step.
  Let $(p, q, r) = (1, 1, 2), (1, 2, 2)$ or $(2, 2, 2)$, and $\lambda \in \Lambda^{p,q,r}$
  with $c < \lambda$ and $a + b < \lambda$.
  Since $\Lambda^{p,q,r}$ is generated by $a, b$ and $c$,
  we have
  \begin{align*}
    \F(\lambda; \Lambda^{p,q,r})
    &= \geom{(0, \lambda)_{\Lambda^{p,q,r}}}
    \\&= \geom{[a, \lambda)_{\Lambda^{p,q,r}}}
    \cup \geom{[b, \lambda)_{\Lambda^{p,q,r}}}
    \cup \geom{[c, \lambda)_{\Lambda^{p,q,r}}}.
  \end{align*}
  Since $\{a, b\}$ has the supremum $a + b$ in $\Lambda^{p,q,r}$, we have
  \begin{align*}
    \geom{[a, \lambda)_{\Lambda^{p,q,r}}} \cup \geom{[b, \lambda)_{\Lambda^{p,q,r}}}
    &\simeq \susp(\geom{[a, \lambda)_{\Lambda^{p,q,r}}} \cap \geom{[b, \lambda)_{\Lambda^{p,q,r}}})
    \\&\simeq \susp \geom{[a + b, \lambda)_{\Lambda^{p,q,r}}}
    \\&\simeq \susp \pt
    \\&\simeq \pt.
  \end{align*}
  It is easy to check that
  \begin{align*}
    (a \le \mu \text{ or } b \le \mu) \text{ and } c \le \mu
      &\iff a + c \le \mu \text{ or } b + c \le \mu \text{ or } c + c \le \mu
      \\&\iff c < \mu
    \end{align*}
  for $\mu \in \Lambda^{p,q,r}$.
  Thus we have
  \begin{align*}
    \F(\lambda; \Lambda^{p,q,r})
    &= \Bigl( \geom{[a, \lambda)_{\Lambda^{p,q,r}}}
      \cup \geom{[b, \lambda)_{\Lambda^{p,q,r}}} \Bigr)
      \cup \geom{[c, \lambda)_{\Lambda^{p,q,r}}}
    \\&\simeq \susp \Bigl[ \Bigl( \geom{[a, \lambda)_{\Lambda^{p,q,r}}} \cup \geom{[b, \lambda)_{\Lambda^{p,q,r}}} \Bigr)
      \cap \geom{[c, \lambda)_{\Lambda^{p,q,r}}} \Bigr]
    \\&= \susp \geom{(c, \lambda)_{\Lambda^{p,q,r}}}
    \\&\cong \susp \geom{(0, \lambda - c)_{\Lambda^{p,q,r}}}
    \\&= \susp \F(\lambda - c; \Lambda^{p,q,r}).
  \end{align*}

  We now turn to the case $(p, q, r) = (1, 1, 2)$.
  Define $f : \Lambda^{1,1,2} \to \Lambda^{1,1,2}$ by
  \[ f(m a + n b + k c) = \min\{m, n + 1\} a + \min\{m + 1, n\} b + k c \qquad(m, n, k \in \N). \]
  Then $f$ is well-defined and a downward closure operator on $\Lambda^{1,1,2}$,
  and satisfies $f^{-1}(0) = \{0\}$ and
  \[ \Fix f = \{\, m a + n b + k c \in \Lambda^{1,1,2} \mid \abs{m - n} \le 1 \,\}. \]
  By Lemma~\ref{lem : downward}, $\F(m a + n b + k c; \Lambda^{1,2,2}) \simeq \pt$
  if either $m = 0$ and $n \ge 2$, or $m \ge 2$ and $n = 0$ hold.
  We need only consider three cases: $\lambda = k c$, $\lambda = a + k c$, and $\lambda = b + k c$.
  Clearly, we have $\F(c, \Lambda^{1,1,2}) = \geom{\emptyset} = S^{-1}$ and
  $\F(2 c, \Lambda^{1,1,2}) = \geom{\{a, b, c\}} \cong S^0 \vee S^0$.
  When $k \ge 3$, we have
  \[ \F(k c; \Lambda^{1,1,2}) \simeq \susp \F((k - 1) c; \Lambda^{1,1,2}) \simeq
    \susp(S^{(k - 1) - 2} \vee S^{(k - 1) - 2}) \simeq S^{k - 2} \vee S^{k - 2} \]
  by induction on $k$.
  Similarly we have $\F(a; \Lambda^{1,1,2}) = S^{-1}$, $\F(a + c; \Lambda^{1,1,2}) \cong S^0$ and
  \[ \F(a + k c; \Lambda^{1,1,2}) \simeq \susp \F(a + (k - 1)c; \Lambda^{1,1,2})
    \simeq \susp S^{(k - 1) - 1} \simeq S^{k - 1} \qquad ( k \ge 2). \]
  By the isomorphism on $\Lambda^{1,1,2}$ swapping $a$ and $b$, we have
  \[ \F(b + k c; \Lambda^{1,1,2}) \cong \F(a + k c; \Lambda^{1,1,2}) \simeq S^{k - 1}. \]

  We now turn to the case $(p, q, r) = (1, 2, 2)$.
  Define $g : \Lambda^{1,2,2} \to \Lambda^{1,2,2}$ by
  \begin{align*}
    g(m a + n b + k c)
    &= \min\{m, \tau^2_1(n) + 1\} a + \min\{\tau^1_2(m) + 1, n\} b + k c
    \\&= \min\{m, \lfloor \tfrac n 2 \rfloor + 1\} a + \min\{2 m + 1, n\} b + k c
    \qquad(m, n, k \in \N).
  \end{align*}
  Then $g$ is well-defined and a downward closure operator on $\Lambda^{1,2,2}$,
  and satisfies $g^{-1}(0) = \{0\}$ and
  \[ \Fix g = \{\, m a + n b + k c \in \Lambda^{1,2,2} \mid -1 \le 2 m - n \le 2 \,\}. \]
  Thus we obtain $\F(m a + n b + k c; \Lambda^{1,2,2}) \simeq \pt$
  if either $m = 0$ and $n \ge 2$, or $m \ge 2$ and $n \le 1$ hold.
  It is easily seen that $\F(c; \Lambda^{1,2,2}) = \geom{\emptyset} = S^{-1}$, and we have
  \[ \F(k c; \Lambda^{1,2,2}) \simeq \susp \F((k - 1)c; \Lambda^{1,1,2})
    \simeq \susp S^{(k - 1) - 1} \simeq S^{k - 1} \qquad (k \ge 2) \]
  by induction on $k$.
  By similar arguments, we have
  \begin{align*}
    \F(a + k c; \Lambda^{1,2,2}) &\simeq S^{k - 1}, \\
    \F(b + k c; \Lambda^{1,2,2}) &\simeq S^{k - 1}, \text{and}\\
    \F(a + b + k c; \Lambda^{1,2,2}) &\simeq S^k \qquad (k \ge 0).
  \end{align*}

  We now turn to the last case $(p, q, r) = (2, 2, 2)$.
  Define $h : \Lambda^{2,2,2} \to \Lambda^{2,2,2}$ by
  \[ h(m a + n b + k c) = \min\{m, n + 1\} a + \min\{m + 1, n\} b + k c \qquad(m, n, k \in \N). \]
  Then $h$ is well-defined and a downward closure operator on $\Lambda^{2,2,2}$,
  and satisfies $h^{-1}(0) = \{0\}$ and
  \[ \Fix h = \{\, m a + n b + k c \in \Lambda^{2,2,2} \mid \abs{m - n} \le 1 \,\}. \]
  Thus we have $\F(m a + n b + k c; \Lambda^{2,2,2}) \simeq \pt$
  if $\abs{m - n} \ge 2$.
  It is easily seen that $\F(c; \Lambda^{2,2,2}) = \geom{\emptyset} = S^{-1}$, and we have
  \[ \F(k c; \Lambda^{2,2,2}) \simeq \susp \F((k - 1) c; \Lambda^{2,2,2}) \simeq \susp S^{(k - 1) - 2} = S^{k - 2}
    \qquad (k \ge 2) \]
  by induction on $k$.
  By similar arguments, we have
  \begin{align*}
    \F(a + k c; \Lambda^{2,2,2}) &\simeq S^{k - 1}, \\
    \F(b + k c; \Lambda^{2,2,2}) &\simeq S^{k - 1}, \\
    \F(a + b + k c; \Lambda^{2,2,2}) &\simeq S^k, \\
    \F(2 a + b + k c; \Lambda^{2,2,2}) &\simeq \pt, \text{and}\\
    \F(a + 2 b + k c; \Lambda^{2,2,2}) &\simeq \pt \qquad (k \ge 0). \qedhere
  \end{align*}
\end{proof}

\section{An application}
\label{sec : application}

Let $\Lambda$ be an additive submonoid with cancellation law and no non-trivial inverses,
and fix a field $K$.
Define the \emph{$i$-th local Betti number} $\beta_i(\lambda; \Lambda)$ of $\Lambda$ for $\lambda \in \Lambda$ by
\[ \beta_i(\lambda; \Lambda) =
  \begin{cases}
    1 & \text{if $\lambda = 0$ and $i = 0$,} \\
    0 & \text{if $\lambda = 0$ and $i \ge 1$,} \\
    0 & \text{if $\lambda \in \Lambda_{+}$ and $i = 0$,} \\
    \dim_K \widetilde H_{i-2}(\F(\lambda; \Lambda); K) & \text{if $\lambda \in \Lambda_{+}$ and $i \ge 1$.}
  \end{cases} \]
Note that for a topological space $X$
the $(-1)$-th reduced homology $\widetilde H_{-1}(X; K)$ equals to $K$ if $X$ is empty, and $0$ otherwise.
The \emph{monoid algebra} $K[\Lambda]$ is the $K$-algebra with underlying space
\[ K[\Lambda] = \bigoplus_{\lambda \in \Lambda} K \z^\lambda \]
and multiplication
\[ \z^\lambda \cdot \z^\mu = \z^{\lambda + \mu} \qquad (\lambda, \mu \in \Lambda). \]
The following theorem is known (see \cite{LS} and \cite{PRS}).

\begin{thm}
  [Laudal-Sletsj{\o}e \cite{LS}]
  The multigraded Poincar\'e series $P^{K[\Lambda]}_K(t, \z)$ of the monoid algebra $K[\Lambda]$ over $K$
  satisfies
  \[ P^{K[\Lambda]}_K(t, \z) = \sum_{i \ge 0} \sum_{\lambda \in \Lambda} \beta_i(\lambda; \Lambda) t^i \z^\lambda. \]
\end{thm}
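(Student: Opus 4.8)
The plan is to interpret the series homologically and then compute the relevant $\operatorname{Tor}$ groups by a bar resolution. Recall that the multigraded Poincar\'e series is by definition
\[ P^{K[\Lambda]}_K(t, \z) = \sum_{i \ge 0} \sum_{\lambda \in \Lambda} \dim_K \operatorname{Tor}^{K[\Lambda]}_i(K, K)_\lambda \, t^i \z^\lambda, \]
where $K = K[\Lambda] / \mathfrak m$ is the residue field at the augmentation ideal $\mathfrak m = \bigoplus_{\lambda \in \Lambda_+} K \z^\lambda$ and the subscript $\lambda$ denotes the component of multidegree $\lambda$. It therefore suffices to establish
\[ \dim_K \operatorname{Tor}^{K[\Lambda]}_i(K, K)_\lambda = \beta_i(\lambda; \Lambda) \]
for all $i \ge 0$ and $\lambda \in \Lambda$. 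Writing $R := K[\Lambda]$, I would first resolve $K$ by the normalized (reduced) bar resolution, so that $\operatorname{Tor}^R_\bullet(K, K)$ is the homology of the complex $\bar R^{\otimes \bullet}$ with $\bar R := \mathfrak m$ and differential
\[ \bar\partial([\z^{\lambda_1} \mid \dotsb \mid \z^{\lambda_n}]) = \sum_{i=1}^{n-1} (-1)^i [\z^{\lambda_1} \mid \dotsb \mid \z^{\lambda_i + \lambda_{i+1}} \mid \dotsb \mid \z^{\lambda_n}], \]
the two outer terms of the bar differential having dropped out because every factor $\z^{\lambda_j}$ lies in the augmentation ideal, so $\epsilon(\z^{\lambda_j}) = 0$.

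The crucial step is to identify the multidegree-$\lambda$ part of this complex with the augmented simplicial chain complex of the Frobenius complex. For $\lambda \in \Lambda_+$, the component of multidegree $\lambda$ of $\bar R^{\otimes n}$ has $K$-basis the tuples $(\lambda_1, \dotsc, \lambda_n)$ with each $\lambda_j \in \Lambda_+$ and $\lambda_1 + \dotsb + \lambda_n = \lambda$. I would send such a tuple to its sequence of partial sums
\[ 0 <_\Lambda \lambda_1 <_\Lambda \lambda_1 + \lambda_2 <_\Lambda \dotsb <_\Lambda \lambda_1 + \dotsb + \lambda_{n-1} <_\Lambda \lambda, \]
which is a chain of $n - 1$ elements of the open interval $(0, \lambda)_\Lambda$, that is, an $(n-2)$-simplex of the order complex $\F(\lambda; \Lambda)$. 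This assignment is a bijection on bases, and under it the merging differential above becomes the simplicial boundary, up to the global sign coming from the index shift: contracting $\z^{\lambda_i}$ and $\z^{\lambda_{i+1}}$ deletes the $i$-th partial sum from the chain. Hence the multidegree-$\lambda$ part of $\bar R^{\otimes \bullet}$, reindexed by $n \mapsto n - 2$, is isomorphic to the augmented chain complex $\widetilde C_\bullet(\F(\lambda; \Lambda); K)$, the one-dimensional piece in bar degree $1$ playing the role of the augmentation term $\widetilde C_{-1} = K$. Passing to homology yields
\[ \operatorname{Tor}^R_i(K, K)_\lambda \cong \widetilde H_{i-2}(\F(\lambda; \Lambda); K) \qquad (\lambda \in \Lambda_+,\ i \ge 1). \]

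It then remains to match the boundary cases with the piecewise definition of $\beta_i$. For $\lambda = 0$ only the empty tuple contributes, so $\bar R^{\otimes \bullet}$ in multidegree $0$ is $K$ concentrated in bar degree $0$, giving $\operatorname{Tor}_0(K, K)_0 = K$ and $\operatorname{Tor}_i(K, K)_0 = 0$ for $i \ge 1$; for $\lambda \in \Lambda_+$ there is no tuple of length $0$, whence $\operatorname{Tor}_0(K, K)_\lambda = 0$. These account for the first three cases in the definition of $\beta_i$, while the displayed isomorphism covers the fourth, and summing over $i$ and $\lambda$ gives the asserted formula. I expect the main difficulty to be bookkeeping rather than conceptual: one must check that the two outer bar terms genuinely vanish on $\mathfrak m$, fix the degree shift so that the empty simplex (bar degree $1$) and the empty interval (where $\widetilde H_{-1} = K$) are handled correctly, and confirm that the signs of the merging maps agree with the orientation conventions for the simplicial boundary (an overall sign being harmless, as it changes neither cycles nor boundaries). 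Once these are verified the theorem follows.
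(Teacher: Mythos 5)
Your proposal is correct, but note that the paper itself offers no proof of this statement: it is quoted as known with references to \cite{LS} and \cite{PRS}, and your bar-resolution argument is essentially the standard proof from that literature, identifying the multidegree-$\lambda$ strand of the reduced bar complex $\mathfrak m^{\otimes \bullet}$ with the augmented simplicial chain complex of $\F(\lambda; \Lambda)$ via partial sums. The only hypotheses you use silently are exactly the paper's standing assumptions --- cancellativity (so that the partial sums are strictly increasing and the passage from chains back to tuples of differences is well defined) and the absence of non-trivial inverses (so that no tuple of positive length has multidegree $0$, which settles the $\lambda = 0$ and $i = 0$ boundary cases) --- and your handling of the degree shift, the empty simplex in bar degree $1$ matching $\widetilde H_{-1}(\emptyset; K) = K$, and the harmless global sign is all accurate.
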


Consider the case $\Lambda = \Lambda^{p,q,r}$ for positive integers $p, q$ and $r$.
We remark that the local Betti number of $\Lambda^{p,q,r}$ does not depend on the field $K$
since the Frobenius complex of $\Lambda^{p,q,r}$ is contractible or homotopy equivalent to a sphere
or a wedge of spheres.

\begin{prp}
  The monoid algebra $K[\Lambda^{p,q,r}]$ of $\Lambda^{p,q,r}$ is isomorphic to
  the quotient algebra $K[x, y, z] / (x^p y^q - z^r)$ of the polynomial ring.
\end{prp}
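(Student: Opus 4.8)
The plan is to exhibit a surjective $K$-algebra homomorphism from $K[x, y, z]$ onto $K[\Lambda^{p,q,r}]$ and to show that its kernel is precisely the principal ideal $(x^p y^q - z^r)$. First I would set up the homomorphism. The polynomial ring $K[x, y, z]$ is the monoid algebra $K[\N a \oplus \N b \oplus \N c]$ of the free additive monoid on $a, b, c$, under the identification of the monomial $x^m y^n z^k$ with $\z^{m a + n b + k c}$; for $w = m a + n b + k c$ I will write $x^w := x^m y^n z^k$. The quotient map $\pi : \N a \oplus \N b \oplus \N c \to \Lambda^{p,q,r}$ defining the presentation is a surjective monoid homomorphism, so it induces a surjective $K$-algebra homomorphism $\phi : K[x, y, z] \to K[\Lambda^{p,q,r}]$ sending $x, y, z$ to $\z^a, \z^b, \z^c$ respectively.

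One inclusion is then immediate. Since $p a + q b = r c$ in $\Lambda^{p,q,r}$, we have $\phi(x^p y^q) = \z^{p a + q b} = \z^{r c} = \phi(z^r)$, so $x^p y^q - z^r \in \ker \phi$ and hence $(x^p y^q - z^r) \subseteq \ker \phi$. For the reverse inclusion the key observation is that $\phi$ carries the monomial $K$-basis $\{x^w\}$ of $K[x, y, z]$ onto the monomial $K$-basis $\{\z^\lambda\}$ of $K[\Lambda^{p,q,r}]$, collapsing exactly those monomials whose exponent vectors lie in the same $\sim$-class. Because the target basis is free, this forces $\ker \phi$ to be spanned as a $K$-vector space by the differences $x^u - x^v$ over pairs $u, v \in \N a \oplus \N b \oplus \N c$ with $\pi(u) = \pi(v)$. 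It therefore suffices to show that every such difference lies in $(x^p y^q - z^r)$.

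Here I would invoke the description of $\sim$ as the congruence generated by $\lambda + p a + q b \sim \lambda + r c$: whenever $\pi(u) = \pi(v)$ there is a finite chain $u = w_0, w_1, \dotsc, w_N = v$ in which consecutive terms differ by a single elementary move, that is, $\{w_j, w_{j+1}\} = \{\lambda_j + p a + q b,\ \lambda_j + r c\}$ for some $\lambda_j$. Each elementary move contributes $x^{w_j} - x^{w_{j+1}} = \pm\, x^{\lambda_j}(x^p y^q - z^r)$, which lies in the principal ideal, and telescoping the identity $x^u - x^v = \sum_{j=0}^{N-1} (x^{w_j} - x^{w_{j+1}})$ shows $x^u - x^v \in (x^p y^q - z^r)$. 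This yields $\ker \phi \subseteq (x^p y^q - z^r)$, so the two ideals agree and $\phi$ descends to the desired isomorphism $K[x, y, z] / (x^p y^q - z^r) \cong K[\Lambda^{p,q,r}]$.

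I expect the main obstacle to be this reverse inclusion, and within it the bookkeeping that turns the congruence-level chain of elementary moves into the telescoping decomposition of $x^u - x^v$, matching up the translation-and-sign factors $\pm\, x^{\lambda_j}$. The conceptual content—that the kernel of a presented commutative monoid algebra is generated by the binomials coming from the defining relations—is standard, so the real work is simply in stating the chain decomposition of $\sim$ cleanly and verifying that each step is absorbed into the single generator $x^p y^q - z^r$.
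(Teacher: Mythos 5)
Your proposal is correct and follows the same route as the paper: the paper's proof consists precisely of defining the homomorphism $\varphi : K[x,y,z] \to K[\Lambda^{p,q,r}]$ sending $x, y, z$ to $\z^a, \z^b, \z^c$ and asserting that it is surjective with kernel generated by $x^p y^q - z^r$. What you have done is supply the standard verification the paper leaves implicit --- the kernel is spanned by binomials $x^u - x^v$ over pairs with $\pi(u) = \pi(v)$, and the telescoping along a chain of elementary moves absorbs each such binomial into the principal ideal --- and that filling-in is sound.
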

\begin{proof}
  Consider the ring homomorphism $\varphi : K[x, y, z] \to K[\Lambda^{p,q,r}]$ which sends
  $x, y$ and $z$ to $\z^a, \z^b$ and $\z^c$ respectively.
  Then $\varphi$ is surjective and its kernel is generated by $x^p y^q - z^r$.
\end{proof}

Proposition~\ref{prp : p,q,1} gives the following formula:

\begin{prp}
  The multigraded Poincar\'e series of $K[\Lambda^{p,q,1}]$ satisfies
  \[ P^{K[\Lambda^{p,q,1}]}_K(t, \z) = 1 + t \z^a + t \z^b + t^2 \z^{a + b} = (1 + t \z^a)(1 + t \z^b). \]
\end{prp}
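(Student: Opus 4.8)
The plan is to invoke the Laudal--Sletsj\o{}e theorem, which expresses $P^{K[\Lambda^{p,q,1}]}_K(t, \z)$ as the generating function $\sum_{i \ge 0} \sum_{\lambda \in \Lambda^{p,q,1}} \beta_i(\lambda; \Lambda^{p,q,1}) \, t^i \z^\lambda$ of the local Betti numbers, and then to compute these Betti numbers directly. First I would use the isomorphism $\Lambda^{p,q,1} \cong \N a \oplus \N b$ recorded just before Proposition~\ref{prp : p,q,1}, which lets me read off the homotopy type of every Frobenius complex $\F(\lambda; \Lambda^{p,q,1})$ from that proposition.

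Next I would unwind the definition of $\beta_i(\lambda; \Lambda^{p,q,1})$. The term $\lambda = 0$ contributes exactly $\beta_0 = 1$, yielding the summand $1$. For $\lambda \in (\Lambda^{p,q,1})_{+}$ and $i \ge 1$ the Betti number equals $\dim_K \widetilde H_{i-2}(\F(\lambda; \Lambda^{p,q,1}); K)$, so the task reduces to identifying the single nonzero reduced homology group of each of the finitely many non-contractible Frobenius complexes. The crucial bookkeeping here is the degree shift by $2$ together with the convention, noted in the definition of $\beta_i$, that $\widetilde H_{-1}(\emptyset; K) = K$.

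Carrying this out case by case using Proposition~\ref{prp : p,q,1}: for $\lambda = a$ and $\lambda = b$ the complex is $S^{-1}$, whose only nonzero reduced homology sits in degree $-1$, so the single nonzero Betti number is $\beta_1 = 1$, contributing $t\z^a$ and $t\z^b$; for $\lambda = a + b$ the complex is $S^0$, with nonzero reduced homology in degree $0$, giving $\beta_2 = 1$ and the summand $t^2 \z^{a+b}$; and whenever $m \ge 2$ or $n \ge 2$ the complex is contractible, so all reduced homology vanishes and the contribution is zero. Summing these contributions gives $1 + t\z^a + t\z^b + t^2\z^{a+b}$, which factors as $(1 + t\z^a)(1 + t\z^b)$.

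There is no serious obstacle here: the argument is entirely mechanical once Proposition~\ref{prp : p,q,1} is in hand. The only point that requires care is the indexing convention in the definition of $\beta_i$, in particular that the empty Frobenius complex $S^{-1}$ carries nontrivial $(-1)$-st reduced homology and hence contributes at $i = 1$, in sharp contrast to a contractible complex, which contributes nothing.
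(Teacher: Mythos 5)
Your proposal is correct and is exactly the argument the paper intends: the paper states this proposition with no written proof, noting only that it ``follows from Proposition~\ref{prp : p,q,1}'' via the Laudal--Sletsj{\o}e theorem, which is precisely the mechanical Betti-number computation you carried out. Your careful handling of the indexing convention, in particular that $S^{-1} = \emptyset$ has $\widetilde H_{-1} = K$ and hence contributes at $i = 1$, fills in the one detail the paper leaves implicit.
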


We now turn to the case $r \ge 2$.

\begin{prp}
  \label{prp : P p,q,r}
  Let $p, q$ and $r$ be positive integers with $p \le q$ and $r \ge 2$.
  Then the multigraded Poincar\'e series of $K[\Lambda^{p,q,r}]$ satisfies the following:
  \begin{enumerate}
    \item If $p = q = 1$,
      \[ P^{K[\Lambda^{1,1,r}]}_K(t, \z) = (T^{1,1,2}_{1,1,r})_{*} (P^{K[\Lambda^{1,1,2}]}_K(t, \z)). \]
      Here $(T^{1,1,2}_{1,1,r})_{*}$ denotes the linear map which sends $t^i \z^\mu$ to $t^i \z^{T^{1,1,2}_{1,1,r}(\mu)}$.
    \item If $p = 1$ and $q \ge 2$,
      \[ P^{K[\Lambda^{1,q,r}]}_K(t, \z) = (T^{1,2,2}_{1,q,r})_{*} (P^{K[\Lambda^{1,2,2}]}_K(t, \z)). \]
    \item If $p, q \ge 2$,
      \[ P^{K[\Lambda^{p,q,r}]}_K(t, \z) = (T^{2,2,2}_{p,q,r})_{*} (P^{K[\Lambda^{2,2,2}]}_K(t, \z)). \]
  \end{enumerate}
\end{prp}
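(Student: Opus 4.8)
The plan is to expand the multigraded Poincar\'e series of $K[\Lambda^{p,q,r}]$ as a weighted sum of monomials, use Theorem~\ref{thm : p,q,r to 2,2,2} to discard the terms indexed by non-fixed points, and then reindex the surviving sum through the bijection between the fixed-point set and $\Lambda^{s,t,2}$, where $s = \min\{2, p\}$ and $t = \min\{2, q\}$. Since the three cases differ only in the values $(s, t) \in \{(1,1), (1,2), (2,2)\}$, I would treat all three uniformly and simply read off the three displayed formulas at the end.

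First I would invoke the Laudal--Sletsj{\o}e theorem to write
\[ P^{K[\Lambda^{p,q,r}]}_K(t, \z) = \sum_{i \ge 0} \sum_{\lambda \in \Lambda^{p,q,r}} \beta_i(\lambda; \Lambda^{p,q,r})\, t^i \z^\lambda, \]
and recall that for $\lambda \in \Lambda_{+}$ the number $\beta_i(\lambda; \Lambda)$ depends only on the homotopy type of $\F(\lambda; \Lambda)$, being $\dim_K \widetilde H_{i-2}(\F(\lambda; \Lambda); K)$ for $i \ge 1$ and $0$ for $i = 0$. By Theorem~\ref{thm : p,q,r to 2,2,2}, whenever $\lambda$ lies outside $\Fix(T^{s,t,2}_{p,q,r} \circ T^{p,q,r}_{s,t,2})$ the complex $\F(\lambda; \Lambda^{p,q,r})$ is contractible, so all its reduced homology vanishes and $\beta_i(\lambda; \Lambda^{p,q,r}) = 0$ for every $i$. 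Hence only fixed points contribute, and I may restrict the inner sum to $\lambda \in \Fix(T^{s,t,2}_{p,q,r} \circ T^{p,q,r}_{s,t,2})$.

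Next, for such a fixed $\lambda$, the same theorem gives $\F(\lambda; \Lambda^{p,q,r}) \simeq \F(T^{p,q,r}_{s,t,2}(\lambda); \Lambda^{s,t,2})$, whence $\beta_i(\lambda; \Lambda^{p,q,r}) = \beta_i(T^{p,q,r}_{s,t,2}(\lambda); \Lambda^{s,t,2})$ for all $i$ by homotopy invariance; the point $\lambda = 0$ is a fixed point with $T^{p,q,r}_{s,t,2}(0) = 0$, and there both sides reduce to the convention $\beta_0(0) = 1$, so this term is also matched. Because $s \le p$, $t \le q$ and $2 \le r$, the transition-map identities derived from Proposition~\ref{prp : trans func} yield $T^{p,q,r}_{s,t,2} \circ T^{s,t,2}_{p,q,r} = \id_{\Lambda^{s,t,2}}$, so $T^{p,q,r}_{s,t,2}$ restricts to a bijection from the fixed-point set onto $\Lambda^{s,t,2}$ with inverse $T^{s,t,2}_{p,q,r}$, exactly as already observed inside the proof of Theorem~\ref{thm : p,q,r to 2,2,2}. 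Putting $\mu = T^{p,q,r}_{s,t,2}(\lambda)$, so that $\lambda = T^{s,t,2}_{p,q,r}(\mu)$, I would reindex the sum as
\[ P^{K[\Lambda^{p,q,r}]}_K(t, \z) = \sum_{i \ge 0} \sum_{\mu \in \Lambda^{s,t,2}} \beta_i(\mu; \Lambda^{s,t,2})\, t^i \z^{T^{s,t,2}_{p,q,r}(\mu)}, \]
which is precisely $(T^{s,t,2}_{p,q,r})_{*}\bigl(P^{K[\Lambda^{s,t,2}]}_K(t, \z)\bigr)$ by the definition of the pushforward.

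I expect the only delicate point to be confirming that this reindexing is a genuine change of summation variable and does not collapse distinct monomials: one must check that $(T^{s,t,2}_{p,q,r})_{*}$ is applied injectively, i.e. that $\mu \mapsto \z^{T^{s,t,2}_{p,q,r}(\mu)}$ is injective. This is guaranteed exactly because $T^{s,t,2}_{p,q,r}$ admits the left inverse $T^{p,q,r}_{s,t,2}$ and is therefore injective, so no two terms coincide and no cancellation or merging occurs. The remaining verifications---matching the constant term and specializing $(s, t)$ to $(1,1), (1,2), (2,2)$ to obtain the three stated formulas---are routine.
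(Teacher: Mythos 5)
Your proposal is correct and follows essentially the same route as the paper's proof: expand the series via the Laudal--Sletsj{\o}e theorem, use Theorem~\ref{thm : p,q,r to 2,2,2} to annihilate the terms at non-fixed points, and reindex the surviving sum through the poset bijection $\Fix(T^{s,t,2}_{p,q,r} \circ T^{p,q,r}_{s,t,2}) \cong \Lambda^{s,t,2}$; the only cosmetic difference is that you treat the three cases uniformly via $s = \min\{2,p\}$, $t = \min\{2,q\}$, whereas the paper writes out the case $p, q \ge 2$ and notes the others are identical.
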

\begin{proof}
  We prove the case $p, q \ge 2$;
  the other cases can be proved in the same way.
  By Theorem~\ref{thm : p,q,r to 2,2,2}, we have
  \[ \beta_i(\lambda; \Lambda^{p,q,r}) =
    \begin{cases}
      \beta_i(T^{p,q,r}_{2,2,2}(\lambda); \Lambda^{2,2,2})
      & \text{if $\lambda \in \Fix(T^{2,2,2}_{p,q,r} \circ T^{p,q,r}_{2,2,2})$,} \\
      0 & \text{otherwise.}
    \end{cases} \]
  Thus we obtain
  \begin{align*}
    P^{K[\Lambda^{p,q,r}]}_K(t, \z)
    &= \sum_{i \ge 0} \sum_{\lambda \in \Lambda^{p,q,r}} \beta_i(\lambda; \Lambda^{p,q,r}) t^i \z^\lambda
    \\&= \sum_{i \ge 0} \sum_{\lambda \in \Fix(T^{2,2,2}_{p,q,r} \circ T^{p,q,r}_{2,2,2})}
    \beta_i(T^{p,q,r}_{2,2,2}(\lambda); \Lambda^{2,2,2}) t^i \z^\lambda
    \\&= \sum_{i \ge 0} \sum_{\mu \in \Lambda^{2,2,2}}
    \beta_i(\mu; \Lambda^{2,2,2}) t^i \z^{T^{2,2,2}_{p,q,r}(\mu)}
    \\&= (T^{2,2,2}_{p,q,r})_*(P^{K[\Lambda^{2,2,2}]}_K(t, \z)). \qedhere
  \end{align*}
\end{proof}

\begin{prp}
  \label{prp : P 2,2,2}
  Let $(p, q, r) = (1, 1, 2), (1, 2, 2)$ or $(2, 2, 2)$.
  Then the multigraded Poincar\'e series of $K[\Lambda^{p,q,r}]$ satisfies
  \begin{align*}
    P^{K[\Lambda^{p,q,r}]}_K(t, \z)
    &= \sum_{k \ge 0} \Bigl( t^k \z^{k c} + t^{k + 1} \z^{a + k c} + t^{k + 1} \z^{b + k c} + t^{k + 2} \z^{a + b + k c} \Bigr)
    \\&= (1 + t \z^a + t \z^b + t^2 \z^{a + b}) \sum_{k \ge 0} t^k \z^{k c}
    \\&= \frac{(1 + t \z^a)(1 + t \z^b)}{1 - t \z^c}.
  \end{align*}
\end{prp}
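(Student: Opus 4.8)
The plan is to combine the formula of Laudal and Sletsj\o e with the homotopy computations of Theorem~\ref{thm : case 2,2,2}. Since $P^{K[\Lambda^{p,q,r}]}_K(t,\z) = \sum_{i\ge 0}\sum_{\lambda}\beta_i(\lambda;\Lambda^{p,q,r})\,t^i\z^\lambda$ and each $\beta_i(\lambda;\Lambda^{p,q,r})$ with $\lambda\in\Lambda^{p,q,r}_{+}$ and $i\ge 1$ is $\dim_K\widetilde H_{i-2}(\F(\lambda;\Lambda^{p,q,r});K)$, I would first record the translation from homotopy type to Betti contribution. A contractible complex contributes nothing for $\lambda\in\Lambda^{p,q,r}_{+}$; a single sphere $S^d$ (with $d\ge -1$, where $S^{-1}=\emptyset$ has $\widetilde H_{-1}=K$) gives $\beta_{d+2}=1$ and hence the monomial $t^{d+2}\z^\lambda$; and a wedge $S^d\vee S^d$ gives $\beta_{d+2}=2$ and hence $2t^{d+2}\z^\lambda$. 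The element $\lambda=0$ contributes the constant term $1$.

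Next I would dispose of the cases $(p,q,r)=(1,2,2)$ and $(2,2,2)$ together, since Theorem~\ref{thm : case 2,2,2} presents them identically. Writing each element uniquely in the normal form $ma+nb+kc$ recorded before that theorem, a non-contractible Frobenius complex occurs exactly when $m\le 1$ and $n\le 1$, that is for the four families $(m,n)\in\{(0,0),(1,0),(0,1),(1,1)\}$ with $k\ge 0$, each equivalent to the single sphere $S^{m+n+k-2}$. By the translation above these families contribute $t^k\z^{kc}$, $t^{k+1}\z^{a+kc}$, $t^{k+1}\z^{b+kc}$, and $t^{k+2}\z^{a+b+kc}$ respectively, the family $(0,0)$ at $k=0$ supplying the constant term; summing over $k$ reproduces the explicit sum in the proposition. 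Here the four monomial types are genuinely distinct, since the relations $a+2b=2c$ and $2a+2b=2c$ do not reduce $a+b$.

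The main obstacle is the case $(p,q,r)=(1,1,2)$, where for $\lambda=kc$ with $k\ge 2$ the complex is a wedge of \emph{two} spheres. Reading off Theorem~\ref{thm : case 2,2,2}, the nonzero contributions are $1$ from $\lambda=0$, the term $t\z^c$ from $\lambda=c$, the terms $2t^k\z^{kc}$ for $k\ge 2$, and $t^{k+1}\z^{a+kc}$, $t^{k+1}\z^{b+kc}$ for $k\ge 0$. The reconciliation with the uniform formula uses the defining relation $a+b=2c$: it gives $\z^{a+b+kc}=\z^{(k+2)c}$, so $\sum_{k\ge 0}t^{k+2}\z^{a+b+kc}=\sum_{j\ge 2}t^j\z^{jc}$, and adding this to $\sum_{k\ge 0}t^k\z^{kc}$ precisely doubles the coefficient of $t^j\z^{jc}$ for $j\ge 2$ while fixing the constant term and the term $t\z^c$. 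Thus the explicit sum in the proposition again agrees, term by term, with the direct count of Betti numbers.

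Finally I would pass to closed form. Factoring the $c$-variable out of the explicit sum gives $(1+t\z^a+t\z^b+t^2\z^{a+b})\sum_{k\ge 0}t^k\z^{kc}$; recognizing $1+t\z^a+t\z^b+t^2\z^{a+b}=(1+t\z^a)(1+t\z^b)$ and summing the geometric series $\sum_{k\ge 0}(t\z^c)^k=(1-t\z^c)^{-1}$ then yields $\dfrac{(1+t\z^a)(1+t\z^b)}{1-t\z^c}$. The only delicate point is the $(1,1,2)$ case, where the collapse $\z^{a+b}=\z^{2c}$ is what makes the wedge-of-two-spheres contributions compatible with the single uniform expression.
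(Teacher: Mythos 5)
Your proposal is correct and follows essentially the same route as the paper's own proof: both read the local Betti numbers off Theorem~\ref{thm : case 2,2,2} via the Laudal--Sletsj{\o}e formula, treat the cases $(1,2,2)$ and $(2,2,2)$ uniformly through the four families $(m,n)\in\{0,1\}^2$, and reconcile the wedge-of-two-spheres contributions in the $(1,1,2)$ case by the relation $a+b=2c$ before factoring into the closed form. Nothing further is needed.
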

\begin{proof}
  We first prove the case $(p, q, r) = (1, 2, 2)$ or $(2, 2, 2)$.
  By Theorem~\ref{thm : case 2,2,2}, we have
  \[ \beta_i(\lambda; \Lambda^{p,q,r}) =
    \begin{cases}
      1 & \text{if $(i, \lambda) \in \{\, (k, k c) \mid k \ge 0 \,\}$,} \\
      1 & \text{if $(i, \lambda) \in \{\, (k + 1, a + k c) \mid k \ge 0 \,\}$,} \\
      1 & \text{if $(i, \lambda) \in \{\, (k + 1, b + k c) \mid k \ge 0 \,\}$,} \\
      1 & \text{if $(i, \lambda) \in \{\, (k + 2, a + b + k c) \mid k \ge 0 \,\}$,} \\
      0 & \text{otherwise.}
    \end{cases} \]
  Thus we obtain
  \begin{align*}
    P^{K[\Lambda^{p,q,r}]}_K(t, \z)
    &= \sum_{i \ge 0} \sum_{\lambda \in \Lambda} \beta_i(\lambda; \Lambda^{p,q,r}) t^i \z^\lambda
    \\&= \sum_{k \ge 0} t^k \z^{k c} + \sum_{k \ge 0} t^{k + 1} \z^{a + k c}
    + \sum_{k \ge 0} t^{k + 1} \z^{b + k c} + \sum_{k \ge 0} t^{k + 2} \z^{a + b + k c}
    \\&= \sum_{k \ge 0} \Bigl( t^k \z^{k c} + t^{k + 1} \z^{a + k c}
      + t^{k + 1} \z^{b + k c} + t^{k + 2} \z^{a + b + k c} \Bigr).
  \end{align*}

  We now turn to the case $(p, q, r) = (1, 1, 2)$.
  By Theorem~\ref{thm : case 2,2,2}, we have
  \[ \beta_i(\lambda; \Lambda^{1,1,2}) =
    \begin{cases}
      1 & \text{if $(i, \lambda) = (0, 0)$ or $(1, c)$,} \\
      2 & \text{if $(i, \lambda) \in \{\, (k, k c) \mid k \ge 2 \,\}$,} \\
      1 & \text{if $(i, \lambda) \in \{\, (k + 1, a + k c) \mid k \ge 0 \,\}$,} \\
      1 & \text{if $(i, \lambda) \in \{\, (k + 1, b + k c) \mid k \ge 0 \,\}$,} \\
      0 & \text{otherwise.}
    \end{cases} \]
    Note that $a + b = 2 c$ in $\Lambda^{1,1,2}$.
    Then we obtain
    \begin{align*}
      P^{K[\Lambda^{1,1,2}]}_K(t, \z)
      &= \sum_{i \ge 0} \sum_{\lambda \in \Lambda} \beta_i(\lambda; \Lambda^{1,1,2}) t^i \z^\lambda
      \\&= 1 + t \z^c + 2 \sum_{k \ge 2} t^k \z^{k c}
      + \sum_{k \ge 0} t^{k + 1} \z^{a + k c} + \sum_{k \ge 0} t^{k + 1} \z^{b + k c}
      \\&= \sum_{k \ge 0} t^k \z^{k c} + \sum_{k \ge 0} t^{k + 1} \z^{a + k c}
      + \sum_{k \ge 0} t^{k + 1} \z^{b + k c} + \sum_{k \ge 0} t^{k + 2} \z^{a + b + k c},
      \\&= \sum_{k \ge 0} \Bigl( t^k \z^{k c} + t^{k + 1} \z^{a + k c}
        + t^{k + 1} \z^{b + k c} + t^{k + 2} \z^{a + b + k c} \Bigr). \qedhere
    \end{align*}
\end{proof}

\begin{thm}
  Let $p, q$ and $r$ be positive integers and assume that $r \ge 2$.
  Then the multigraded Poincar\'e series of $K[\Lambda^{p,q,r}]$ satisfies
  \[ P^{K[\Lambda^{p,q,r}]}_K(t, \z) = \frac{(1 + t \z^a)(1 + t \z^b)(1 + t \z^c)}{1 - t^2 \z^{r c}}. \]
\end{thm}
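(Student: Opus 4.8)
The plan is to feed the explicit base-case series of Proposition~\ref{prp : P 2,2,2} into the reduction of Proposition~\ref{prp : P p,q,r} and then evaluate the resulting pushforward along the transition map. First I would reduce to the case $p \le q$: the map sending $a, b, c$ to $b, a, c$ gives an isomorphism $\Lambda^{p,q,r} \cong \Lambda^{q,p,r}$, and since the asserted right-hand side is symmetric in $\z^a$ and $\z^b$ (indeed it does not depend on $p$ or $q$ at all), it suffices to treat $p \le q$. With $p \le q$ and $r \ge 2$, Proposition~\ref{prp : P p,q,r} expresses $P^{K[\Lambda^{p,q,r}]}_K(t, \z)$ as $(T)_*$ applied to $P^{K[\Lambda^{s,t,2}]}_K(t, \z)$, where $T = T^{s,t,2}_{p,q,r}$ and $(s, t)$ is $(1,1)$, $(1,2)$ or $(2,2)$ in the three respective cases; by Proposition~\ref{prp : P 2,2,2} each of these base series equals $(1 + t\z^a + t\z^b + t^2\z^{a+b})\sum_{k \ge 0} t^k \z^{kc}$.

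Next I would compute the pushforward termwise, using $T(ma + nb + kc) = \tau^s_p(m)a + \tau^t_q(n)b + \tau^2_r(k)c$. The key observation is that $T$ fixes the $a$- and $b$-parts of every monomial occurring in the base series: these monomials have $a$- and $b$-coefficients in $\{0, 1\}$, and the relevant transition functions satisfy $\tau(0) = 0$ and $\tau(1) = 1$ --- either because the function is $\tau^1_1 = \id$ (in the $p = 1$ or $q = 1$ slots), or because $\tau^2_p(1) = \min\{1, p - 1\} = 1$ for $p \ge 2$ (and likewise for $q$). Hence $(T)_*$ leaves the factor $1 + t\z^a + t\z^b + t^2\z^{a+b}$ intact and replaces $\sum_{k \ge 0} t^k \z^{kc}$ by $\sum_{k \ge 0} t^k \z^{\tau^2_r(k)c}$.

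It then remains to sum this last series in closed form. Since $\tau^2_r(2j) = rj$ and $\tau^2_r(2j+1) = rj + 1$ (immediate from the definition of $\tau^2_r$, or from Proposition~\ref{prp : trans func}(1)), splitting the index by parity gives
\[ \sum_{k \ge 0} t^k \z^{\tau^2_r(k)c} = \sum_{j \ge 0} \bigl( t^{2j}\z^{jrc} + t^{2j+1}\z^{(jr+1)c} \bigr) = (1 + t\z^c)\sum_{j \ge 0} (t^2\z^{rc})^j = \frac{1 + t\z^c}{1 - t^2\z^{rc}}. \]
Multiplying by the preserved factor and using $1 + t\z^a + t\z^b + t^2\z^{a+b} = (1 + t\z^a)(1 + t\z^b)$ yields the claimed rational expression, uniformly in the three cases.

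I do not anticipate a genuine obstacle; the only point needing care is the bookkeeping for $\Lambda^{1,1,r}$, where the relation $a + b = rc$ forces distinct monomials in the base series to coincide in the target. This causes no difficulty precisely because $(T)_*$ is well-defined independently of the chosen representative: the identity $\tau^2_r(k+2) = \tau^2_r(k) + r$ of Proposition~\ref{prp : trans func}(1) ensures that sending $\z^{a+b+kc}$ to $\z^{a+b+\tau^2_r(k)c}$ agrees with pushing forward its canonical form $\z^{(k+2)c}$, so the formal computation above remains valid in $K[\Lambda^{p,q,r}]$.
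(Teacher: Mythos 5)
Your proposal is correct and follows essentially the same route as the paper: feed the base series of Proposition~\ref{prp : P 2,2,2} into the reduction of Proposition~\ref{prp : P p,q,r}, observe that the transition map fixes the $a$- and $b$-parts (coefficients $0$ or $1$), split the $c$-exponent by parity via $\tau^2_r(2j+k) = rj + k$, and sum the geometric series. The paper writes out only the case $p, q \ge 2$ and declares the others similar, so your uniform treatment of the three cases and your explicit check that the identification $a + b = rc$ in $\Lambda^{1,1,r}$ causes no ambiguity are welcome refinements of detail, not a different argument.
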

\begin{proof}
  We prove in the case $p, q \ge 2$; the other cases can be proved in a similar way.
  By Proposition~\ref{prp : P p,q,r} and Proposition~\ref{prp : P 2,2,2}, we have
  \begin{align*}
    P^{K[\Lambda^{p,q,r}]}_K(t, \z)
    &= (T^{2,2,2}_{p,q,r})_{*}(P^{K[\Lambda^{2,2,2}]}_K(t, \z))
    \\&= (T^{2,2,2}_{p,q,r})_{*} \Bigl( \sum_{m = 0}^1 \sum_{n = 0}^1 \sum_{k = 0}^\infty
    t^{m + n + k} \z^{m a + n b + k c} \Bigr)
    \\&= (T^{2,2,2}_{p,q,r})_{*} \Bigl( \sum_{m = 0}^1 \sum_{n = 0}^1 \sum_{\ell = 0}^\infty \sum_{k = 0}^1
    t^{m + n + (2 \ell + k)} \z^{m a + n b + (2 \ell + k) c} \Bigr)
    \\&= \sum_{m = 0}^1 \sum_{n = 0}^1 \sum_{\ell = 0}^\infty \sum_{k = 0}^1
    t^{m + n + (2 \ell + k)} \z^{T^{2,2,2}_{p,q,r}(m a + n b + (2 \ell + k) c)}
    \\&= \sum_{m = 0}^1 \sum_{n = 0}^1 \sum_{\ell = 0}^\infty \sum_{k = 0}^1
    t^{m + n + (2 \ell + k)} \z^{m a + n b + (r \ell + k) c}
    \\&= \sum_{m = 0}^1 t^m \z^{m a} \sum_{n = 0}^1 t^n \z^{n b}
    \sum_{\ell = 0}^\infty t^{2 \ell} \z^{r \ell c} \sum_{k = 0}^1 t^k \z^{k c}
    \\&= \frac{(1 + t \z^a)(1 + t \z^b)(1 + t \z^c)}{1 - t^2 \z^{r c}}. \qedhere
 \end{align*}
\end{proof}

\end{document}